\newcommand{\R}{{\mathbb R}}
\numberwithin{equation}{section}
\newtheorem{lemma}{Lemma}
\newtheorem{proposition}{Proposition}
\newtheorem{definition}{Definition}
\date{}
\title{Automatic parameter selection for the TGV regularizer in image restoration under Poisson noise}
\newcommand{\D}{\mathrm{D}}
\newcommand{\A}{\mathrm{A}}
\newcommand{\I}{\mathrm{I}}
\newcommand{\G}{\mathrm{G}}
\newcommand{\argmax}{\arg\max}
  \definecolor{cosmiclatte}{rgb}{1.0, 0.97, 0.91}
\definecolor{azure}{rgb}{0.94, 1.0, 1.0}
\definecolor{beaublue}{rgb}{0.74, 0.83, 0.9}
\definecolor{Gray}{rgb}{0.75,0.75,0.75}
\definecolor{apricot}{rgb}{0.98, 0.81, 0.69}
\definecolor{almond}{rgb}{0.94, 0.87, 0.8}
\definecolor{LightCyan}{rgb}{0.88,1,1}
\author[1]{Daniela di Serafino\thanks{daniela.diserafino@unina.it}}
\author[1]{Monica Pragliola\thanks{monica.pragliola@unina.it}}
\affil[1]{Department of Mathematics and Applications, University of Naples Federico II, Naples, Italy}
\begin{document}
	
\maketitle

\begin{abstract}
	We address the image restoration problem under Poisson noise corruption. The Kullback-Leibler divergence, which is typically adopted in the variational framework as data fidelity term in this case, is coupled with the second-order Total Generalized Variation (TGV$^2$). The TGV$^2$ regularizer is known to be capable of preserving both smooth and piece-wise constant features in the image, however its behavior is subject to a suitable setting of the parameters arising in its expression. We propose a hierarchical Bayesian formulation of the original problem coupled with a Maximum A Posteriori estimation approach, according to which the unknown image and parameters can be jointly and automatically estimated by minimizing a given cost functional. The minimization problem is tackled via a scheme based on the Alternating Direction Method of Multipliers, which also incorporates a procedure for the automatic selection of the regularization parameter by means of a popular discrepancy principle. Computational results show the effectiveness of our proposal.
	\par\bigskip
	\noindent
	Keywords: TGV$^2$ regularization, automatic parameter estimation, image restoration, Poisson noise.
\end{abstract}

\section{Introduction\label{sec:intro}}

The image restoration task under Poisson noise corruption has been extensively tackled in literature as it arises in many real-world applications involving photon counting processes, ranging from microscopy to astronomical imaging problems \cite{pois1}.

In these settings, the general image formation model takes the form
\begin{equation}\label{eq:lin_mod}
\bm{b} \;{=}\; \mathrm{Poiss}(\bm{y})\,,\quad \bm{y}\;{=}\;\bm{\mathrm{A}u}\,+\,\bm{\gamma}\,,
\end{equation}
where $\bm{\mathrm{A}}\in\R^{n\times n}$ is the blur operator, which is assumed to be known, $\bm{b}\in\mathbb{N}^n$ and $\bm{u}\in\R^n$ are the vectorized observed and uncorrupted image, respectively, $\bm{\gamma}\in\R^n$ is a non-negative background emission, and
$\mathrm{Poiss}(\bm{y})\;{:=}\;(\mathrm{Poiss}(y_1),\ldots,\mathrm{Poiss}(y_n))$, with $\mathrm{Poiss}(y_i)$ representing the realization of a Poisson random variable with mean $y_i$.

Typically, problem \eqref{eq:lin_mod} cannot be addressed directly by solving a linear system because of the ill conditioning of the blur operator. As a result, one seeks for an estimate $\bm{u}^*$ of $\bm{u}$ that is as close as possible to the original uncorrupted image. In the variational framework, the original ill-posed linear inverse problem is recast as the problem of minimizing a cost functional $\mathcal{J}:\Omega \to \R_+$, where $\Omega\subseteq \R^n$ and $\R_+$ is the set of non-negative real numbers:
\begin{equation}\label{eq:var_mod}
\bm{u}^*\;{\in}\;\arg\min_{\bm{u}\,\in\,\Omega}\left\{\mathcal{J}(\bm{u})\;{:=}\;\mathcal{R}(\bm{u})\,+\,\lambda\,\mathcal{F}(\bm{\mathrm{A}u};\bm{b})\right\} .
\end{equation}
Typically, in imaging problems subject to Poisson noise, $\Omega$ is the non-negative orthant of $\R^n$. 
The functional $\mathcal{R}:\Omega \to \R_+$ is the so-called \emph{regularization term}, encoding prior information on the unknown, while $\mathcal{F}:\Omega\to\R_+$ is the \emph{fidelity term}, which measures the distance between the data $\bm{b}$ and the blurred image $\bm{\mathrm{A}u}$ in a way that accounts for the noise statistics. Finally, the regularization parameter $\lambda>0$ balances the actions of the fidelity and regularization terms in the overall functional $\mathcal{J}$, and its setting is crucial in order to obtain high-quality restorations. 

In presence of Poisson noise, the fidelity term is typically set as the Kullback-Leibler divergence of $\bm{\mathrm{A}u}+\bm{\gamma}$ from $\bm{b}$, i.e.,
\begin{equation}
\label{eq:KL} 
\mathcal{F}(\bm{\mathrm{A}u};\bm{b})\;{=}\;  D_{KL}(\bm{\mathrm{A}u}+\bm{\gamma};\bm{b})\;{=}\;\sum_{i=1}^n F((\bm{\A u})_i+\gamma_i;\bm{b}_i)
\end{equation}
where
\begin{equation}
\label{eq:KL2} 
F((\bm{\A u})_i+\gamma_i;\bm{b}_i)\;{:=}\; (\,(\bm{\mathrm{A}u})_i\,{+}\,\gamma_i\,) - b_i \ln (\,(\bm{\mathrm{A}u})_i\,{+}\,\gamma_i\,) + b_i \ln b_i - b_i \,,
\end{equation}
and we assume $b_i\ln b_i=0$ for $b_i=0$.


The choice of a suitable regularization term is a very delicate issue, which is typically related to the nature of the processed image. A widespread regularizer is the popular Total Variation (TV) \cite{tvrof}, which reads
\begin{equation}\label{eq:tv}
\mathcal{R}(\bm{u})\;{=}\; \mathrm{TV}(\bm{u})\;{:=}\;\sum_{i=1}^n\|(\bm{\D u})_i\|_2\,,    
\end{equation}
where $\bm{\D}=(\bm{\D}_h,\bm{\D}_v)\in\R^{2n\times n}$ denotes the discrete gradient operator, with $\bm{\D}_h$, $\bm{\D}_v\in\R^{n\times n}$
two linear operators representing finite-difference discretizations of the first-order partial derivatives of the image $\bm{u}$ in the
horizontal and vertical directions. By plugging \eqref{eq:KL}, \eqref{eq:KL2} and \eqref{eq:tv} into \eqref{eq:var_mod}, we get what we will refer to as the TV-KL variational model:
\begin{equation}
\label{eq:tvKL}
\tag{TV-KL}
\bm{u}^*\;{\in}\;\arg\min_{\bm{u}\,\in\,\Omega}\left\{\mathrm{TV}(\bm{u})\,+\,\lambda\,D_{KL}(\bm{\mathrm{A}u};\bm{b})\right\}\,.
\end{equation}

A plethora of works has focused on the analytical study of the (non-smooth) TV term, which presents very desirable properties such as convexity and edge preservation. However, TV is also affected by some intrinsic limitations; above all, we mention the \emph{staircasing} effect, which can be thought of as the tendency of promoting edges at the expense of smooth structures.

Many attempts have been made in order to design regularization terms capable of overcoming the TV limitations. In this perspective, one of the most successful regularizers is the Total Generalized Variation (TGV) \cite{tgv,tgv2}, which in its second order discrete formulation is defined as
\begin{equation}\label{eq:tgv2}
\mathcal{R}(\bm{u})\;{=}\;  \mathrm{TGV}^2(\bm{u},\bm{w}) \;{:=}\; \min_{w\in\R^{2n}}\left\{\alpha_0 \sum_{i=1}^n\|(\bm{\D u})_i-\bm{w}_i\|_2 + \alpha_1\sum_{i=1}^n\|(\bm{\mathcal{E}w})_i\|_2\right\}\,,
\end{equation}
where $\bm{w}_i=((\bm{w}_1)_i,(\bm{w}_2)_i)\in\R^2$,
\begin{equation}
\bm{\mathcal{E}} \;{=}\; \begin{bmatrix}
\bm{\bm{\D}}_h & \bm{0}_{n\times n}\\
\frac{1}{2}\bm{\bm{\D}}_v &\frac{1}{2}\bm{\bm{\D}}_h\\
\frac{1}{2}\bm{\bm{\D}}_v &\frac{1}{2}\bm{\bm{\D}}_h\\
\bm{0}_{n\times n} &\bm{\bm{\D}}_v
\end{bmatrix}\,\in\R^{4n\times 2n}
\end{equation}
is the discrete symmetrized gradient operator, and $\alpha_0$, $\alpha_1$ are positive parameters.
By plugging \eqref{eq:KL}, \eqref{eq:KL2} and \eqref{eq:tgv2} into \eqref{eq:var_mod}, we obtain the TGV$^2$-KL model:
\begin{equation}
\label{eq:tgvKL}
\tag{TGV$^2$-KL}
\bm{u}^*\;{\in}\;\arg\min_{\bm{u}\,\in\,\Omega}\left\{\mathrm{TGV}^2(\bm{u})\,+\,\lambda\,D_{KL}(\bm{\mathrm{A}u};\bm{b})\right\}\,.
\end{equation}
The TGV$^2$ regularization has been proven to overcome the TV staircasing effect as it is also capable of promoting piece-wise affine structures. However, this capability is subject to a suitable selection of the parameters $\alpha_0$ and $\alpha_1$, which can highly influence the quality of the output restoration \cite{kostas}. 

In Figure~\ref{fig:teaser}, we show the results obtained by applying the TGV$^2$-KL model with $\lambda=1$ and different choices of $(\alpha_0,\alpha_1)$ to the restoration of the test image \texttt{penguin} corrupted by Gaussian blur and Poisson noise. Upon the selection of a large value of $\alpha_1$ (see Figure~\ref{fig:peng_1}), TGV$^2$ behaves as a TV regularizer and promotes piece-wise constant structures. On the other hand, when $\alpha_0$ is too large, as for the restoration in Figure~\ref{fig:peng_2}, the TGV$^2$ acts as a TV$^2$ regularization term and it is not capable of preserving edges. The two limit behaviors can be compared with the restoration shown in Figure~\ref{fig:peng_3} and corresponding to an `optimal' selection of $(\alpha_0,\alpha_1)$ in terms of quality measures. 

\begin{figure}
	\centering
	\begin{subfigure}[t]{0.4\textwidth}
		\centering
		\begin{overpic}[height=4.5cm]{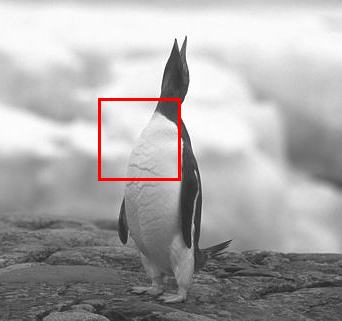}
			\put(56,.4){\color{red}%
				\frame{\includegraphics[height = 2.1cm]{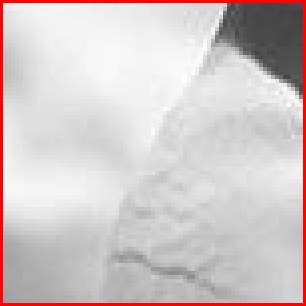}}}
		\end{overpic}    
		\caption{original}
		\label{fig:peng_isnr}
	\end{subfigure}
	\begin{subfigure}[t]{0.4\textwidth}
		\centering
		\begin{overpic}[height=4.5cm]{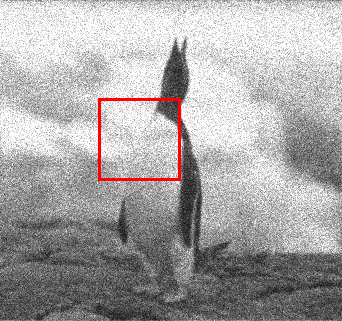}
			\put(56,.4){\color{red}%
				\frame{\includegraphics[height = 2.1cm]{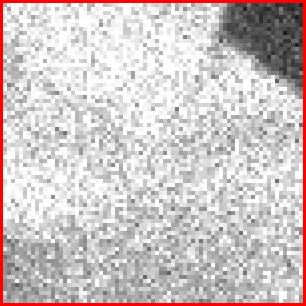}}}
		\end{overpic} 
		\caption{data}
		\label{fig:peng_ssim}
	\end{subfigure}\\
	\begin{subfigure}[t]{0.32\textwidth}
		\centering
		\begin{overpic}[height=4.5cm]{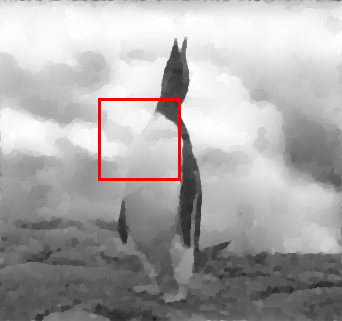}
			\put(56,.4){\color{red}%
				\frame{\includegraphics[height = 2.1cm]{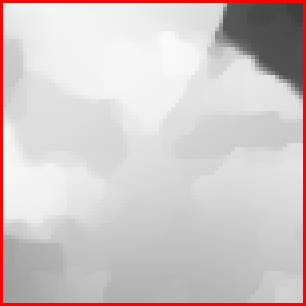}}}
		\end{overpic} \caption{$(\alpha_0,\alpha_1)=(0.1,0.6)$}
		\label{fig:peng_1}
	\end{subfigure}
	\begin{subfigure}[t]{0.32\textwidth}
		\centering
		\begin{overpic}[height=4.5cm]{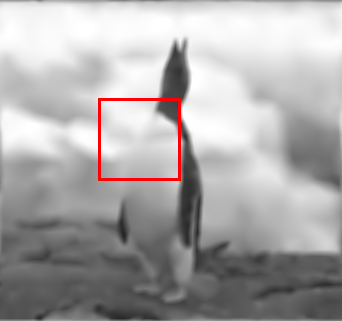}
			\put(56,.4){\color{red}%
				\frame{\includegraphics[height = 2.1cm]{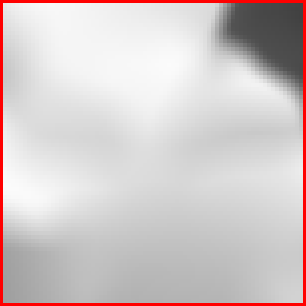}}}
		\end{overpic} \subcaption{$(\alpha_0,\alpha_1)=(0.6,0.3)$}
		\label{fig:peng_2}
	\end{subfigure}
	\begin{subfigure}[t]{0.32\textwidth}
		\centering
		\begin{overpic}[height=4.5cm]{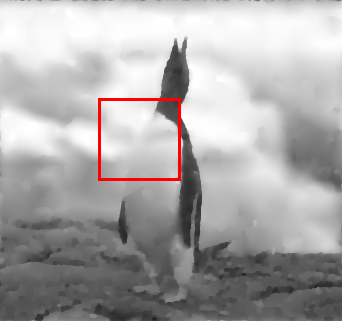}
			\put(56,.4){\color{red}%
				\frame{\includegraphics[height = 2.1cm]{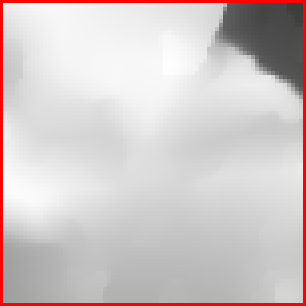}}}
		\end{overpic} \caption{$(\alpha_0,\alpha_1)=(0.1,0.3)$}
		\label{fig:peng_3}
	\end{subfigure}
	\caption{Original image (a), observed data corrupted by blur and Poisson noise (b), solution of the TGV$^2$-KL model for different selections of the parameters $(\alpha_0, \alpha_1)$ (c,d,e).\label{fig:teaser}}
\end{figure}

Besides some heuristic strategies - see, e.g., \cite{knoll,knoll2} - several works have been focused on designing a selection method which does not depend on the application at hand and on the class of images considered. In this direction, a bilevel optimization scheme has been designed in \cite{carola} for the automatic estimation of the TGV$^2$ parameters in image denoising problems, i.e., when $\bm{\A}=\bm{\I}_{n}$. More recently, the empirical Bayesian approach proposed in \cite{pereyra} for the automatic estimation of the regularization parameter in imaging problems has been applied for the selection of $(\alpha_0,\alpha_1)$. Therein, the TGV$^2$ prior has been slightly modified so as to allow the adoption of the proposed method, which ultimately relies on exploring the likelihood distribution via a Monte Carlo sampling technique, in image denoising problems.

\subsection{Contribution}

In this paper we present an automatic procedure for the solution of the TGV$^2$-KL model applied to the restoration of images undergoing the formation model in~\eqref{eq:lin_mod}. Our approach relies on the hierarchical Bayesian paradigm according to which the unknown image $\bm{u}$ as well as the unknown parameters $\alpha_0$ and $\alpha_1$ can be modeled as random variables. The regularization parameter $\lambda$, which does not naturally follows from the statistical derivation, is artificially included in the final joint model; this choice is motivated by previous results on fully hierarchical Bayesian approaches~\cite{bit}.

From the numerical viewpoint, the final joint model will be tackled via an alternating minimization scheme; the minimization problem with respect to $\bm{u}$ is performed via the the Alternating Direction Method of Multipliers (ADMM) equipped with an automatic procedure for the selection of $\lambda$. The step for updating the parameters is based on the current iterate and on a robust initialization, which ultimately facilitates the convergence of the overall scheme.

The proposed strategy returns high-quality restorations with image quality measures that are close to the best ones we achieved by manually tuning the parameters.



\medskip 

The paper is organized as follows. In Section~\ref{sec:not} we set the notations and report some preliminary results that will be useful in the next sections. The statistical derivation leading to the joint variational model is carried out in Section~\ref{sec:map}. In Section~\ref{sec:AM} we outline the alternating scheme and study analytical properties of the arising minimization problems, which are solved by means of the numerical method detailed in Section~\ref{sec:num}. In Section~\ref{sec:ex} we report the results of the proposed approach applied to some image restoration problems. Finally, we draw some conclusions and outline possible future research in Section~\ref{sec:concl}.

\section{Notations and preliminaries\label{sec:not}}

Throughout the paper, we denote by $\R_+$ the set of non-negative real numbers and by $\R_{++}$ the set $\R_+\setminus\{0\}$. We also denote by $\R_+^n$ and $\R_{++}^n$ the non-negative and the positive orthant of $\R^n$. We use the notations null($\bm{\mathrm{M}}$),  $\bm{0}_{n}$ and $\bm{\mathrm{I}}_{n}$ to denote the null space of the linear operator $\bm{\mathrm{M}}$, the vector with $n$ zero entries, and the $n\times n$ identity matrix, respectively. Given any vectors $\bm{v}$ and $\bm{w}$, for brevity we write $(\bm{v}, \bm{w})$ instead of $(\bm{v}^T, \bm{w}^T)^T$. Random variables are indicated by capital letters, e.g., $X$, with $x$ denoting a realization of $X$. The probability density function (pdf) of a continuous random variable $X$ is indicated by $\pi_X$, while the probability mass function (pmf) of a discrete random variable $X$ is indicated by $\mathrm{P}_X$. The characteristic function $\chi_S$ and the indicator function $\iota_S(x)$ of a set $S$ are defined as follows:
%
%
\begin{equation}
\chi_S(x) \;{=}\; \left\{
\begin{array}{cc}
1&\mathrm{if}\;x\,\in\,S   \\
0& \mathrm{otherwise}
\end{array}
\right.\,,\qquad 
\iota_S(x) \;{:=}\; - \ln \chi_{S}(x) \;{=}\;  \left\{
\begin{array}{cc}
0&\mathrm{if}\;x\,\in\,S   \\
+\infty& \mathrm{otherwise}
\end{array}
\right.\,.
\end{equation}

For completeness, we recall the definitions of few well-known distributions that are used in Section~\ref{sec:map}.

\begin{definition}[Poisson distribution]
	\label{def:pois}
	The discrete univariate random variable $X$ follows a Poisson distribution with mean parameter $\eta\in\R_{++}$, in short $X\sim \mathrm{Poiss}(\eta)$, if its pmf has the following form:
	\begin{equation}
	\mathrm{P}_{X}(x)\;{=}\;\frac{\eta^{x}\,e^{-\eta}}{x!}\,.
	\end{equation}
\end{definition}

\begin{definition}[Exponential distribution]
	The continuous univariate random variable $X$ follows an exponential distribution with scale parameter $\alpha\in\R_{++}$, in short $X\sim \mathrm{Exp}(\alpha)$, if its pdf has the following form:
	\begin{equation}
	\pi_{X}(x)\;{=}\;\alpha\,e^{-\alpha\,x}\,.
	\end{equation}
\end{definition}

\begin{definition}[Gamma distribution]\label{def:gam}
	The continuous univariate random variable $X$ follows a gamma distribution with scale parameter $\theta\in\R_{++}$ and shape parameter $k\in\R_{++}$, in short $X\sim \mathrm{Gamma}(k,\theta)$, if its pdf has the following form:
	\begin{equation}\label{eq:gam}
	\pi_{X}(x)\;{=}\;\frac{1}{\Gamma(k)\,\theta^k}\,x^{k-1}\,e^{-\frac{x}{\theta}}\, ,
	\end{equation}
	where $\Gamma(k)$ is the gamma function at $k$. The mode $m$ and the variance $\sigma^2$ of $X\sim\mathrm{Gamma}(k,\theta)$ are given by
	\begin{equation}\label{eq:defms}
	m = (k-1)\,\theta\,,\quad \sigma^2 = k\,\theta^2\,,
	\end{equation}
	with the mode $m$ being defined only for $k\geq 1$.
\end{definition}

In Figure~\ref{fig:dist}, we report the behavior of the exponential (left) and gamma (right) pdfs for different selections of the parameters $\alpha$ and $(k,\theta)$, respectively. We remark that the gamma pdf reduces to the exponential pdf when $k=1$.

\begin{figure}
	\centering
	\begin{subfigure}[t]{0.4\textwidth}
		\centering
		\includegraphics[width=5.5cm]{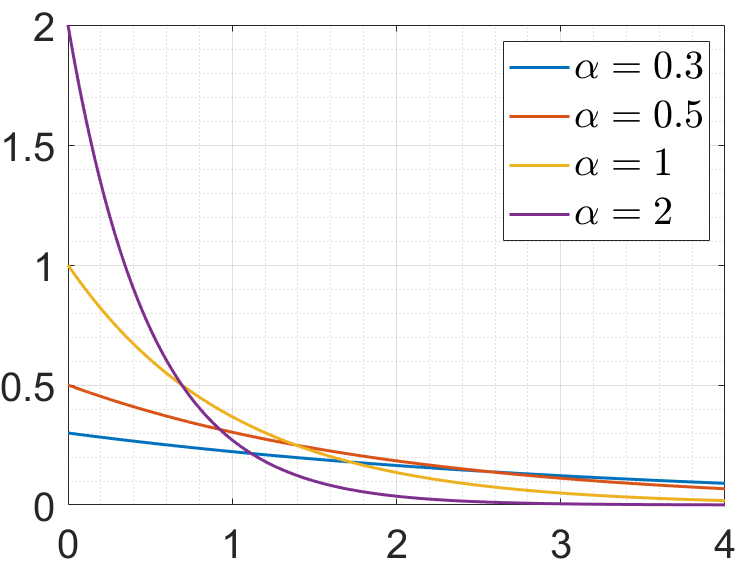}
		\caption{}
		\label{fig:exp}
	\end{subfigure}
	\begin{subfigure}[t]{0.4\textwidth}
		\centering
		\includegraphics[width=5.5cm]{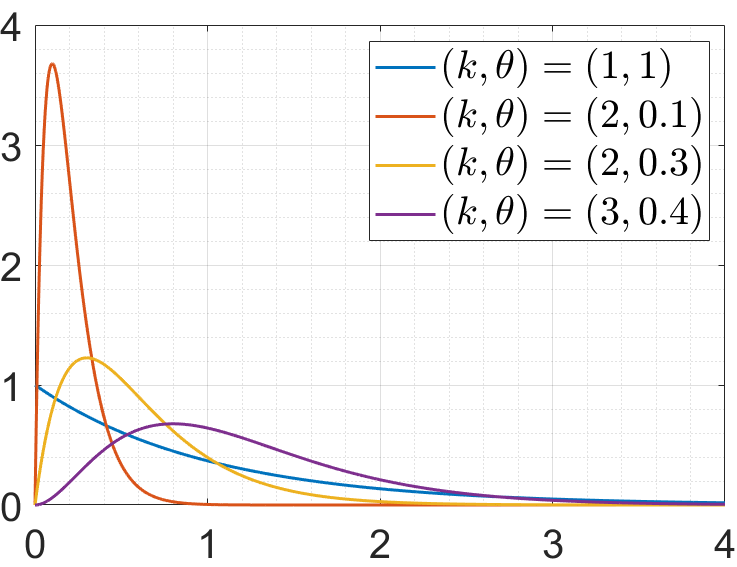}
		\caption{}
		\label{fig:gam}
	\end{subfigure}
	\caption{Probability density functions of the exponential (a) and gamma distribution (b) for different values of the parameters.}
	\label{fig:dist}
\end{figure}

\section{Statistical derivation\label{sec:map}}

Statistical approaches for image processing have
become very popular in the last decades due to their ability to incorporate non-deterministic and qualitative information in the forward model - see \cite{Calvetti2011}. According to the Bayesian paradigm, the unknowns of interest are modeled as random variables. Here, we adopt a hierarchical approach, which means that the unknown of primary interest, amounting to the image $\bm{u}$ and to the fictitious gradient image $\bm{w}$, is augmented with the unknown parameters $\alpha_0$ and $\alpha_1$~\cite{hyper}. The information or beliefs available on the unknowns are encoded in the so-called \emph{prior} pdf $\pi(\bm{u},\bm{w},\alpha_0,\alpha_1)$. In the same way, the observation $\bm{b}$ is regarded as a realization of a discrete random variable whose behavior, when the unknowns are fixed, is encoded in the \emph{likelihood} pmf $\mathrm{P}(\bm{b}\mid \bm{u},\bm{w},\alpha_0,\alpha_1)$. In this framework, the goal is to recover the \emph{posterior} pdf $\pi(\bm{u},\bm{w},\alpha_0,\alpha_1\mid\bm{b})$, which informs on the behavior of the unknowns once that a given observation has been acquired. The posterior pdf is related to the prior pdf and the likelihood pmf via the Bayes' formula 
\begin{align}\label{eq:bayes}
\begin{split}
\pi(\bm{u},\bm{w},\alpha_0,\alpha_1\mid\bm{b}) &\;{=}\; \frac{\mathrm{P}(\bm{b}\mid \bm{u},\bm{w},\alpha_0,\alpha_1)\,\pi(\bm{u},\bm{w},\alpha_0,\alpha_1)}{\mathrm{P}(\bm{b})} \\
&\;{\propto}\; \mathrm{P}(\bm{b}\mid \bm{u},\bm{w},\alpha_0,\alpha_1)\,\pi(\bm{u},\bm{w},\alpha_0,\alpha_1)\,,
\end{split}
\end{align}
where the evidence term $\mathrm{P}(b)$ can be neglected as it is constant with respect to the unknowns of interest. 

\medskip

We start detailing the expression of the likelihood pmf which, in light of the independence of the Poisson realizations $b_1,\ldots,b_n$ and based on Definition \ref{def:pois}, reads:
\begin{equation}\label{eq:lik}
\mathrm{P}(\bm{b}\mid\bm{u}) \;{=}\; \prod_{i=1}^{n}\mathrm{P}(b_i\mid u_i) \;{=}\; \prod_{i=1}^{n}\frac{ (\,(\bm{\mathrm{A}u})_i\,{+}\,\gamma_i\,)^{b_i}\,e^{-(\,(\bm{\mathrm{A}u})_i\,{+}\,\gamma_i\,)}}{b_i!}\,.
\end{equation}
Note that the likelihood pmf is actually conditioned on the sole $\bm{u}$. For what concerns the joint prior pdf on the unknown $(\bm{u},\bm{w},\alpha_0,\alpha_1)$, we have:
\begin{equation}
\pi(\bm{u},\bm{w},\alpha_0,\alpha_1) \;{=} \pi(\bm{u},\bm{w}\mid \alpha_0,\alpha_1)\,\pi(\alpha_0,\alpha_1)\,,
\end{equation}
where $\pi(\alpha_0,\alpha_1)$ is the \emph{hyperprior} that accounts for the lack of information on the parameters and incorporate prior information or beliefs on their behavior. For $\bm{u}$ not depending on $\alpha_1$ and $\bm{w}$ not depending on $\alpha_0$, the conditioned prior $\pi(\bm{u,\bm{w}}\mid \alpha_0,\alpha_1)$ is treated as follows:
\begin{equation}
\pi(\bm{u},\bm{w}\mid\alpha_0,\alpha_1) \;{=}\; \pi(\bm{u}\mid \alpha_0,\bm{w})\, \pi_{0}(\bm{w}\mid \alpha_1) \;{=}\;\underbrace{\pi_{c}(\bm{u})\,\pi_{0}(\bm{u}\mid \alpha_0,\bm{w})}_{\pi(\bm{u}\mid\alpha_0,\bm{w})}\,\pi_{0}(\bm{w}\mid \alpha_1)
\end{equation}
where $\pi_{c}(\bm{u}) := \chi_{\Omega}(\bm{u})$ is an \emph{improper} prior (i.e. it does not normalize to 1) accounting for the non-negativity of $\bm{u}$ \cite{ten}, 
while $\pi_0(\bm{u}\mid\alpha_0,\bm{w})$ and $\pi_0(\bm{w}\mid\alpha_1)$ encode the expected behavior of $\bm{u}$ and $\bm{w}$, respectively, which is actually expressed in terms of the expected behavior of 
\begin{eqnarray}\label{eq:zu}
& z_u:\R^n\to\R\,,\qquad &z_u(\bm{u}\,;\,\bm{w}) \;{=}\; \bm{\D u}-\bm{w} ,\\
\label{eq:zw}
& z_w:\R^{2n}\to \R\,,\qquad &z_w(\bm{w})\;{=}\; \bm{\mathcal{E}w} .
\end{eqnarray}
Here, we assume that the magnitudes of the entries of $z_u(\bm{u}\,;\,\bm{w})$, $z_w(\bm{w})$ are independent realizations of an exponential distribution with parameters $\alpha_0$, $\alpha_1$, respectively, that is
\begin{align}
\label{eq:pr01}
\pi_0(z_u(\bm{u}\,;\,\bm{w})\mid\alpha_0) \;{=}\;&     \alpha_0^n\, \exp\left(-\alpha_0 \sum_{i=1}^n\|(\bm{\D u})_i-\bm{w}_i\|_2\right)\,,\\
\label{eq:pr02}
\pi_0(z_w(\bm{w})\mid\alpha_1) \;{=}\; &\alpha_1^n\, \exp\left(-\alpha_1 \sum_{i=1}^n\|(\bm{\mathcal{E}w})_i\|_2\right)\,.
\end{align}
We thus have
\begin{equation}\label{eq:pr}
\pi_0(\bm{u}\mid \alpha_0,\bm{w}) \;{=}\; \frac{1}{Z(\alpha_0)}\,\pi_0(z_u(\bm{u}\,;\,\bm{w})\mid\alpha_0)\,,\quad 
\pi_0(\bm{w}\mid \alpha_1) \;{=}\;\frac{1}{Z(\alpha_1)} \,\pi_0(z_w(\bm{w})\mid\alpha_1)\,,
\end{equation}
where $Z(\alpha_0)$ and $Z(\alpha_1)$ are referred to as partition functions and guarantee that the above prior pdfs normalize to $1$ when considering all the possible configurations of $\bm{u}$ and $\bm{w}$, respectively. In general, a closed form expression for the partition function for TV-type priors is not available, and the same holds for the pdfs in \eqref{eq:pr}. As a consequence, one can either approximate it - as in \cite{part1,part2} - or neglect it - see \cite{part3}. We remark that the approximation of the partition function is typically performed relying on the erroneous assumption of independence of the entries of $z_u(\bm{u}\,;\,\bm{w})$, $z_w(\bm{w})$, which clearly introduce a certain amount of inexactness in the resulting prior pdf. In what follows, we are going to assume $Z(\alpha_0),Z(\alpha_1)$ to be constant with respect to $\alpha_0,\alpha_1$, respectively, so that they can be neglected. At the end of this section, we discuss the consequences of this choice. We also remark that our selection of priors \eqref{eq:pr01}, \eqref{eq:pr02} is tantamount to assume that both $\bm{u}$, $\bm{w}$ are modeled as Markov Random Fields with the associate Gibbs' priors given by \eqref{eq:pr01}, \eqref{eq:pr02} and \eqref{eq:pr} \cite{geman}.

Based on the Bayes' formula in \eqref{eq:bayes} and on our selection of likelihood pmf and prior pdf, we have that the posterior pdf can be obtained as follows:
\begin{equation}\label{eq:post}
\pi(\bm{u},\bm{w},\alpha_0,\alpha_1\mid \bm{b}) \propto \mathrm{P}(\bm{b}\mid\bm{u})\,\pi_c(\bm{u})\,\pi_0(\bm{u}\mid \alpha_0,\bm{w})\, \pi_0(\bm{w}\mid \alpha_1)\, \pi(\alpha_0,\alpha_1)\,.
\end{equation}

In the context of hierarchical Bayesian approaches, the unknown parameters arising in the posterior pdf can either be removed by marginalization or can be estimated jointly with the unknowns $\bm{u},\bm{w}$ \cite{marg}. Here, we resort to the latter approach and invoke the popuplar Maximum A Posteriori (MAP) estimation strategy to synthesize the posterior pdf with its mode:
\begin{equation}
\{\bm{u}^*,\bm{w}^*,\alpha_0^*,\alpha_1^*\}\;{\in}\;\argmax_{\bm{u},\bm{w},\alpha_0,\alpha_1} \left\{
\mathrm{P}(\bm{b}\mid\bm{u})\,\pi_c(\bm{u})\,\pi_0(\bm{u}\mid \alpha_0,\bm{w})\, \pi_0(\bm{w}\mid \alpha_1)\, \pi(\alpha_0,\alpha_1)\right\}\,,
\end{equation}
or, equivalently,
\begin{align}\label{eq:map}
\begin{split}
\{\bm{u}^*,\bm{w}^*,\alpha_0^*,\alpha_1^*\}\;{\in}\;\arg\min_{\bm{u},\bm{w},\alpha_0,\alpha_1} \big\{
&-\ln\mathrm{P}(\bm{b}\mid\bm{u})-\ln\pi_c(\bm{u})\,-\ln\pi_0(\bm{u}\mid \alpha_0,\bm{w})\\
&-\ln\pi_0(\bm{w}\mid \alpha_1)-\ln \pi(\alpha_0,\alpha_1)\big\}\,.
\end{split}
\end{align}
By plugging \eqref{eq:lik} and \eqref{eq:pr} into \eqref{eq:map}, and by properly adjusting the constant terms, we get
\begin{equation}\label{eq:map1}
\{\bm{u}^*,\bm{w}^*,\alpha_0^*,\alpha_1^*\}\;{\in}\;\arg\min_{(\bm{u},\bm{w},\alpha_0,\alpha_1)\,\in\, \mathcal{C}} \bigg\{D_{KL}(\bm{\A u}{+}\bm{\gamma};\bm{b}){+}\mathcal{R}(\bm{u},\bm{w},\alpha_0,\alpha_1)
{+}\mathcal{P}(\alpha_0,\alpha_1)\bigg\}\,,
\end{equation}
where $\mathcal{C}=\Omega \times \R^{2n}\times\R^2_{++}$ and
\begin{equation}\label{eq:jreg}
\mathcal{R}(\bm{u},\bm{w},\alpha_0,\alpha_1) \;{:=}\;\alpha_0\sum_{i=1}^n\|(\bm{\D u})_i-\bm{w}_i\|_2+\alpha_1\sum_{i=1}^n\|(\bm{\mathcal{E}w})_i\|_2\,,
\end{equation}
is the joint non-differentiable regularization term, while
\begin{equation}\label{eq:pen}
\mathcal{P}(\alpha_0,\alpha_1) \;{:=}\; 
-n\ln\alpha_0-n\ln\alpha_1-\ln\pi(\alpha_0,\alpha_1)
\end{equation}
is the parameter penalty term.

The synergies between the hierarchical formulation and the MAP estimate have been explored in detail in the last few years in the context of sparse recovery problems \cite{Calvetti_2019,Calvetti_2020}. Nonetheless, it must be remarked that the MAP problem derived in the mentioned works is somehow \emph{exact}, as the the distribution considered therein for the primary unknown is equipped with a partition function that admits a closed form with respect to the parameters. 

\medskip

In what follows, we consider different scenarios for setting the hyperprior $\pi(\alpha_0,\alpha_1)$ under the assumption of independence of $\alpha_0,\alpha_1$, i.e. $\pi(\alpha_0,\alpha_1) \;{=}\; \pi(\alpha_0)\,\pi(\alpha_1)\,.$


\subsection{Noninformative hyperpriors}

When no prior information is available on the unknown parameters $\alpha_0$, $\alpha_1$, one can typically set \emph{noninformative} hyperpriors, which are somehow considered a \emph{default} choice in this scenario \cite{bayesian}.

Typical noninformative hyperpriors that have been employed for the parameter estimation task in imaging problems are the flat hyperprior (see, e.g., \cite{part2}), namely
\begin{equation}\label{eq:non1}
\pi(\alpha_0)\;{=}\;\pi(\alpha_1)\;{=}\;\rho\,\in\,\R_{++} \implies \mathcal{P}(\alpha_0,\alpha_1) = -n\ln\alpha_0-n\ln\alpha_1\,,
\end{equation}
and the Jeffrey's hyperpriors (see, e.g., \cite{jeff}), which read
\begin{equation}\label{eq:non2}
\pi(\alpha_0)\;{\propto}\;\frac{1}{\alpha_0}\,,\, \pi(\alpha_1)\;{\propto}\;\frac{1}{\alpha_1}\implies \mathcal{P}(\alpha_0,\alpha_1) = -(n-1)\ln\alpha_0-(n-1)\ln\alpha_1\,.
\end{equation}
One can easily notice that the above hyperpriors are both improper, as they do not normalize to 1. Nonetheless, improper priors and hyperpriors are often employed in practice as they are still capable of encoding qualitative information on the unknown of interest \cite{bayesian}.

Generally speaking, the major risk that may occur when adopting a noninformative hyperprior is that the final posterior distribution may result to be particularly sensitive to small changes in the parametrized prior distribution.
The mentioned lack of robustness has an analytical counterpart which comes from observing that, upon the selection of the noninformative hyperpriors considered above, the term $\mathcal{P}(\alpha_0,\alpha_1)$ approaches $-\infty$ as $\alpha_0$ or $\alpha_1$ tends to $+\infty$ - see \eqref{eq:non1} and \eqref{eq:non2}. As a result, when either $\bm{\D u}=\bm{w}$ or $\bm{w}$ is a constant image, that is one of the two summations in \eqref{eq:jreg} vanishes, the overall cost functional in \eqref{eq:map1} is unbounded from below and does not attain a minimum.

\subsection{Informative hyperpriors}

The potential instability related to the selection of noninformative hyperpriors can be circumvented by exploiting the (possibly poor) information available on the unknown parameters.

Let us assume that we can approximately determine the interval of $\R_+$ which is most likely to include the sought values $\alpha_0^*$ and $\alpha_1^*$. In this scenario, one can select a hyperprior that
\begin{itemize}
	\item[(a)] is capable of driving the search of $\alpha_0$ and $\alpha_1$ in the detected interval;
	\item[(b)] allows for outliers in case the information about the interval is too poor;
	\item[(c)] makes the resulting parameter penalty term easily tractable.
\end{itemize}
In this perspective, a hyperprior that is typically adopted to model the behavior of the unknown parameters in the hierarchical Bayesian literature is the gamma distribution (see Definition \ref{def:gam}) \cite{Calvetti_2019,part1,part2}. In fact, from Figure \ref{fig:gam}, it is clear that, depending on the values of the shape and scale parameters, the realizations of a gamma random variable are more likely to belong to a certain interval of $\R_+$ corresponding to the larger values assumed by the pdf. However, as highlighted in \cite{Calvetti_2019} in the context of sparse recovery problems, the heavy tail structure of the gamma distribution guarantees the realization of outliers, that is of values that can be very far from the mode of the distribution. Finally, as it will be remarked in Sections \ref{sec:AM} and \ref{sec:num}, the choice of a gamma hyperprior appears to be very convenient in terms of computational efficiency and analytical properties of $\mathcal{P}(\alpha_0,\alpha_1)$.

Hence, we assume that $\alpha_0$ and $\alpha_1$ follow a gamma distribution with parameters $(\theta_0,k_0)$ and $(\theta_1,k_1)$, respectively, with $\theta_0,\theta_1>0$ and $k_0,k_1>1$, the latter condition guaranteeing the existence of a finite mode. In Section \ref{sec:num}, we provide some details about setting the shape and scale parameters of the newly introduced gamma hyperpriors in a robust manner.


Recalling the expression of the gamma pdf given in \eqref{eq:gam}, by plugging it into \eqref{eq:pen} - keeping in mind that $\ln\pi(\alpha_0,\alpha_1)=\ln\pi(\alpha_0)+\ln\pi(\alpha_1)$ - and dropping out the constant terms, we get 
\begin{equation}\label{eq:pen2}
\mathcal{P}(\alpha_0,\alpha_1) = \,{-}\,(n+k_0-1)\ln\alpha_0\,{+}\,\frac{\alpha_0}{\theta_0}\,{-}\,(n+k_1-1)\ln\alpha_1\,{+}\,\frac{\alpha_1}{\theta_1}\,.
\end{equation}


\medskip

Before detailing the analytical properties of the cost functional in \eqref{eq:map1}, with $\mathcal{R}$ as in \eqref{eq:jreg} and $\mathcal{P}$ as in \eqref{eq:pen2}, we highlight that neglecting the partition functions $Z(\alpha_0)$, $Z(\alpha_1)$ arising in \eqref{eq:pr} simplifies the analytic expression of the MAP problem but, at the same time, introduces a certain degree of inexactness in the final variational model. In~\cite{bit}, for a different class of models, it has been already observed that the considered simplification can be responsible for a fragile behavior of the MAP estimate, which may return particularly oversmoothed restorations under the influence of the parameters $k_0,\theta_0,k_1,\theta_1$. To increase the robustness of the proposed strategy, without giving up the computational efficiency guaranteed by the simplified MAP approach, we artificially introduce a regularization parameter $\lambda$ multiplying the KL fidelity term. As a result, from now on, the problem of interest can be formulated as
\begin{equation}
\label{eq:map31}
\{\bm{u}^*,\bm{w}^*,\alpha_0^*,\alpha_1^*\}\;{\in}\;\arg\min_{(\bm{u,\bm{w},\alpha_0,\alpha_1})\,\in\,\mathcal{C}} \mathcal{J}(\bm{u},\bm{w},\alpha_0,\alpha_1)\,.
\end{equation}
with the cost functional $\mathcal{J}$ being written in extended form as
\begin{align}\label{eq:Jnew}
\begin{split}
\mathcal{J}(\bm{u},\bm{w},\alpha_0,\alpha_1)\;{:=}\;&\lambda\,D_{KL}(\bm{\A u}+\bm{\gamma};\bm{b})+\alpha_0\,\sum_{i=1}^n\|(\bm{\D u})_i-\bm{w}_i\|_2+\alpha_1\,\sum_{i=1}^n\|(\bm{\mathcal{E} w})_i\|_2\\
&\,{-}\,(n+k_0-1)\ln\alpha_0\,{+}\,\frac{\alpha_0}{\theta_0}\,{-}\,(n+k_1-1)\ln\alpha_1\,{+}\,\frac{\alpha_1}{\theta_1}\,.
\end{split}
\end{align}
Clearly, the above modification, which is expected to mitigate the oversmoothing effect by correctly balancing the contribution of the fidelity and of the penalty terms in the overall functional, has to be equipped with an automatic estimation strategy, which is discussed in the following section.

\section{Existence of solutions\label{sec:AM}}

In this section, we discuss analytical properties of the functional $\mathcal{J}$ in~\eqref{eq:Jnew} and introduce the alternating minimization scheme that will be employed for the solution of problem~\eqref{eq:map31}-\eqref{eq:Jnew}.

\medskip



We start recalling well-known properties of the KL fidelity term in~\eqref{eq:Jnew}, which have been studied in detail in~\cite{Bertero2010}. More specifically, the $D_{KL}(\cdot\,;\,\bm{b})$ functional is non-negative, coercive and convex on the non-negative orthant $\Omega := \R^n_+$. 
For what concerns the joint regularization term $\mathcal{R}$ in \eqref{eq:jreg}, one can easily observe that it is coercive in the joint variable $(\alpha_0,\alpha_1,\bm{u},\bm{w})$.
Finally, the penalty term $ \mathcal{P}$ yielded by the two Gamma hyperpriors is coercive and strictly convex; in fact, one can easily check that for each $(\alpha_0,\alpha_1)\in\R^2$ the Hessian matrix of $\mathcal{P}$ is positive-definite.

Based on the aforementioned properties, the following result on the existence of solutions for problem~\eqref{eq:map31}-\eqref{eq:Jnew} holds true.

\begin{proposition}[Existence of solutions]
	The functional $\mathcal{J}$ defined in \eqref{eq:Jnew} is lower semi-continuous and coercive on the set $\mathcal{C}=\Omega\times \R^{2n}\times \R_{++}^2$, and the minimization problem \eqref{eq:map31}-\eqref{eq:Jnew} admits global minimizers.
\end{proposition}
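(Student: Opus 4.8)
The plan is to establish the two standard ingredients — lower semi-continuity and coercivity on $\mathcal{C}$ — and then invoke the generalized Weierstrass theorem. The main subtlety is that $\mathcal{C}=\Omega\times\R^{2n}\times\R_{++}^2$ is not closed (the factor $\R_{++}^2$ is open), so I cannot directly apply Weierstrass on $\mathcal{C}$; instead I would extend $\mathcal{J}$ to the closure $\overline{\mathcal{C}}=\Omega\times\R^{2n}\times\R_{+}^2$ by setting $\mathcal{J}=+\infty$ whenever $\alpha_0=0$ or $\alpha_1=0$, and show that the penalty term $\mathcal{P}$ already forces this blow-up, so the extension is itself lower semi-continuous. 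This reduces the problem to minimizing a proper, lsc, coercive function over the closed set $\overline{\mathcal{C}}$.

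First I would record the pieces, each of which is cited or immediate from the excerpt: the KL term $\lambda D_{KL}(\bm{\A u}+\bm{\gamma};\bm{b})$ is non-negative, convex and lower semi-continuous on $\Omega$ (from \cite{Bertero2010}); the two regularization sums $\alpha_0\sum_i\|(\bm{\D u})_i-\bm{w}_i\|_2$ and $\alpha_1\sum_i\|(\bm{\mathcal{E}w})_i\|_2$ are continuous and non-negative on $\mathcal{C}$, hence lsc; and the penalty $\mathcal{P}(\alpha_0,\alpha_1)=-(n+k_0-1)\ln\alpha_0+\alpha_0/\theta_0-(n+k_1-1)\ln\alpha_1+\alpha_1/\theta_1$ is continuous and strictly convex on $\R_{++}^2$, with each one-dimensional factor $g(\alpha)=-(n+k-1)\ln\alpha+\alpha/\theta$ tending to $+\infty$ both as $\alpha\to 0^+$ (since $k>1$ makes the coefficient $n+k-1$ strictly positive, in fact already $n\geq 1$ suffices) and as $\alpha\to+\infty$. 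Summing, $\mathcal{J}$ is lsc on $\mathcal{C}$, and the $\alpha\to0^+$ blow-up shows the $+\infty$-extension to $\overline{\mathcal{C}}$ is lsc there. Properness is clear since $\mathcal{C}$ is nonempty and $\mathcal{J}$ is finite at, say, $\bm{u}$ any strictly positive vector, $\bm{w}=\bm{0}$, $\alpha_0=\alpha_1=1$.

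The coercivity step is the heart of the argument. I would show $\mathcal{J}(\bm{u},\bm{w},\alpha_0,\alpha_1)\to+\infty$ whenever $\|(\bm{u},\bm{w},\alpha_0,\alpha_1)\|\to\infty$ within $\overline{\mathcal{C}}$. Split according to which block of variables is unbounded along a given sequence. If $\alpha_0\to\infty$ or $\alpha_1\to\infty$, the corresponding term $\alpha_j/\theta_j$ in $\mathcal{P}$ dominates the logarithm and drives $\mathcal{J}\to+\infty$ (all other terms being bounded below, KL by $0$, the sums by $0$, and the remaining $\mathcal{P}$-factor by its finite minimum). If instead $(\alpha_0,\alpha_1)$ stays in a compact subset of $\R_{++}^2$ but $\|(\bm{u},\bm{w})\|\to\infty$, then $\alpha_0,\alpha_1$ are bounded below by some $c>0$, so $\mathcal{J}\geq \lambda D_{KL}(\bm{\A u}+\bm{\gamma};\bm{b})+c\big(\sum_i\|(\bm{\D u})_i-\bm{w}_i\|_2+\sum_i\|(\bm{\mathcal{E}w})_i\|_2\big)+(\text{bounded below})$; here I need that the map $(\bm{u},\bm{w})\mapsto D_{KL}(\bm{\A u};\bm{b})+\|\bm{\D u}-\bm{w}\|_{2,1}+\|\bm{\mathcal{E}w}\|_{2,1}$ is coercive on $\Omega\times\R^{2n}$. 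This is the classical TGV-coercivity fact: $\|\bm{\mathcal{E}w}\|_{2,1}$ controls $\bm{w}$ modulo $\mathrm{null}(\bm{\mathcal{E}})$, and on that null space — which for the symmetrized-gradient operator consists of the affine/linear fields — the term $\|\bm{\D u}-\bm{w}\|_{2,1}$ together with the coercivity of $D_{KL}$ in the directions not annihilated by $\bm{\D}$ (recall $D_{KL}(\bm{\A u}+\bm\gamma;\bm b)$ is coercive on $\Omega$ because $\bm{\A}$ is assumed to have, up to the standard imaging normalization, trivial relevant null behaviour) pins down the remaining degrees of freedom. A mixed case, $(\bm{u},\bm{w})$ bounded but $(\alpha_0,\alpha_1)$ approaching the boundary $\partial\R_{++}^2$, is already covered by the $\alpha_j\to0^+$ blow-up of $\mathcal{P}$ above. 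Combining all cases gives coercivity of the extended $\mathcal{J}$ on the closed set $\overline{\mathcal{C}}$.

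Finally I would conclude: a proper, lsc, coercive function on the nonempty closed set $\overline{\mathcal{C}}\subseteq\R^{3n+2}$ attains its infimum (Weierstrass: sublevel sets are nonempty, closed and bounded, hence compact, and an lsc function attains its minimum on a compact set); any minimizer $(\bm{u}^*,\bm{w}^*,\alpha_0^*,\alpha_1^*)$ necessarily has $\alpha_0^*,\alpha_1^*>0$ because $\mathcal{J}=+\infty$ on the rest of $\overline{\mathcal{C}}$, so it lies in $\mathcal{C}$ and is a global minimizer of the original problem \eqref{eq:map31}-\eqref{eq:Jnew}. The step I expect to be the main obstacle is the joint coercivity in $(\bm{u},\bm{w})$ for fixed positive $\alpha_j$: one must carefully handle the interaction between $\mathrm{null}(\bm{\D})$ (constants), $\mathrm{null}(\bm{\mathcal{E}})$ (affine fields), and the behaviour of $D_{KL}\circ\bm{\A}$, rather than the easy blow-ups in the $\alpha$-variables. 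If the paper's convention is that $\bm{\A}$ preserves the constant vector (as blur operators do) and the data satisfy the usual $\sum_i b_i>0$ condition, the argument goes through cleanly; I would state that coercivity of $D_{KL}(\bm{\A\cdot}+\bm\gamma;\bm b)$ on $\Omega$ precisely as in \cite{Bertero2010} and build the rest on top of it.
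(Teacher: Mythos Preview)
Your proposal is correct and follows the same high-level route as the paper --- establish lower semi-continuity and coercivity of the individual ingredients and then invoke Weierstrass --- but your treatment is substantially more careful: the paper's own justification is a one-paragraph sketch that simply records that $D_{KL}$ is non-negative, convex and coercive on $\Omega$, asserts that $\mathcal{R}$ is ``coercive in the joint variable $(\alpha_0,\alpha_1,\bm{u},\bm{w})$'' (which, taken literally, is not true by itself), notes that $\mathcal{P}$ is coercive and strictly convex, and then states the proposition without further argument. Your extension to $\overline{\mathcal{C}}$ to handle the open factor $\R_{++}^2$, your case analysis for coercivity, and your identification of the $(\bm{u},\bm{w})$-coercivity (the TGV$^2$--KL interaction with $\mathrm{null}(\bm{\D})$ and $\mathrm{null}(\bm{\mathcal{E}})$) as the genuinely nontrivial step are precisely the details the paper leaves implicit.
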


The minimization problem in \eqref{eq:map31}-\eqref{eq:Jnew} can thus be addressed by means of a block-coordinate descent method with respect to $(\bm{u,\bm{w}})$ and to $(\alpha_0,\alpha_1)$. Upon a suitable initialization $\alpha_0^{(0)}$, $\alpha_1^{(0)}$, and based on the definitions given in \eqref{eq:jreg}-\eqref{eq:pen}, the $k$-th iteration of the alternating scheme reads:
\begin{align}\label{eq:subuw}
\left\{\bm{u}^{(k+1)},\bm{w}^{(k+1)}\right\}\;{\in}\;&\arg\min_{(\bm{u},\bm{w})\,\in\,\Omega\times\R^{2n}}\left\{\lambda\,D_{KL}(\bm{\A u}+\bm{\gamma};\bm{b})\;{+}\;\mathcal{R}(\bm{u},\bm{w},\alpha_0^{(k)},\alpha_1^{(k)})\right\}\\
\label{eq:subal}
\left\{\alpha_0^{(k+1)},\alpha_1^{(k+1)}\right\}\;{\in}\;&\arg\min_{(\alpha_0,\alpha_1)\,\in\,\R_{++}^2}\left\{\mathcal{R}(\bm{u}^{(k+1)},\bm{w}^{(k+1)},\alpha_0,\alpha_1)\;{+}\;\mathcal{P}(\alpha_0,\alpha_1)\right\}\,.
\end{align}
%
%
First, we observe that problem \eqref{eq:subuw} can be equivalently written as follows
\begin{equation}\label{eq:subuw2}
\bm{u}^{(k+1)}\;{\in}\;\arg\min_{\bm{u}\,\in\,\Omega}\left\{\lambda\,D_{KL}(\bm{\A u}+\bm{\gamma};\bm{b})\;{+}\;\mathrm{TGV}^2(\bm{u})\right\}\,,
\end{equation}
with TGV$^2(\bm{u})$ being as in \eqref{eq:tgv2} with parameters $\alpha_{0}^{(k)}$ and $\alpha_{1}^{(k)}$.

Hence, based on the TGV$^2(\bm{u})$ properties \cite{tgv,tgv2} and on the analysis carried out at the beginning of this section, we have that the following results on the existence of minimizers for the problems in \eqref{eq:subal} and \eqref{eq:subuw2} hold true.

\begin{proposition}
	The cost function in \eqref{eq:subuw2} is convex, lower semi-continuous and coercive on $\Omega\times\R^{2n}$, hence it admits global minimizers.
\end{proposition}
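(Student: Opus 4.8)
The plan is to check, for the fixed values $\alpha_0^{(k)},\alpha_1^{(k)}>0$, the three stated properties and then conclude by the direct method of the calculus of variations. It is convenient to read \eqref{eq:subuw2} through its joint $(\bm u,\bm w)$ reformulation, i.e.\ the minimization over $\Omega\times\R^{2n}$ of
\[
\Phi(\bm u,\bm w):=\lambda\,D_{KL}(\bm{\A u}+\bm\gamma;\bm b)+\alpha_0^{(k)}\sum_{i=1}^n\|(\bm{\D u})_i-\bm w_i\|_2+\alpha_1^{(k)}\sum_{i=1}^n\|(\bm{\mathcal E w})_i\|_2 ,
\]
since minimizing $\Phi$ over $\bm w$ recovers $\lambda D_{KL}+\mathrm{TGV}^2(\bm u)$; the two problems share the same minimal value and their minimizers correspond. \emph{Convexity}: $D_{KL}(\cdot\,;\bm b)$ is convex on $\Omega$ by the properties recalled at the start of this section and $\bm u\mapsto\bm{\A u}+\bm\gamma$ is affine, while each $\|(\bm{\D u})_i-\bm w_i\|_2$ and $\|(\bm{\mathcal E w})_i\|_2$ is a norm precomposed with a linear map, hence convex in $(\bm u,\bm w)$; with $\alpha_0^{(k)},\alpha_1^{(k)}>0$, $\Phi$ is a non-negative combination of convex functions on the convex set $\Omega\times\R^{2n}$. \emph{Lower semicontinuity}: the KL term is lower semicontinuous on $\Omega$ (continuous where finite and, in each coordinate with $b_i>0$, tending to $+\infty$ as $(\bm{\A u})_i+\gamma_i\to 0$), the finitely many norm terms are continuous, and adding the indicator of the closed set $\Omega\times\R^{2n}$ preserves lower semicontinuity.

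The crux is \emph{coercivity}. Let $(\bm u^{(m)},\bm w^{(m)})\in\Omega\times\R^{2n}$ with $\|\bm u^{(m)}\|_2^2+\|\bm w^{(m)}\|_2^2\to\infty$; I would pass to subsequences and distinguish two regimes. If $\|\bm u^{(m)}\|_2\to\infty$, then the coercivity of the KL fidelity term in $\bm u$ on $\Omega$ recalled above forces the first summand of $\Phi$ to diverge, while the other two are non-negative, so $\Phi\to+\infty$. Otherwise $\|\bm u^{(m)}\|_2$ is bounded, hence $\|\bm w^{(m)}\|_2\to\infty$ and $\|(\bm{\D u}^{(m)})_i\|_2$ is bounded. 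Decompose $\bm w^{(m)}=\bm p^{(m)}+\bm q^{(m)}$ orthogonally with $\bm p^{(m)}\in\mathrm{null}(\bm{\mathcal E})$ and $\bm q^{(m)}\in\mathrm{null}(\bm{\mathcal E})^\perp$; then along a further subsequence $\|\bm p^{(m)}\|_2\to\infty$ or $\|\bm q^{(m)}\|_2\to\infty$. In the first subcase, using $\bm{\mathcal E w}^{(m)}=\bm{\mathcal E q}^{(m)}$, the lower bound $\|\bm{\mathcal E}\bm v\|_2\ge c_1\|\bm v\|_2$ with $c_1>0$ valid on $\mathrm{null}(\bm{\mathcal E})^\perp$, and equivalence of norms on $\R^{4n}$, one gets $\sum_i\|(\bm{\mathcal E w}^{(m)})_i\|_2\ge c_2\|\bm q^{(m)}\|_2\to\infty$, so the $\alpha_1^{(k)}$ term diverges. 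In the second subcase $\|\bm q^{(m)}\|_2$ is bounded and $\|\bm p^{(m)}\|_2\to\infty$, and the reverse triangle inequality together with norm equivalence on $\R^{2n}$ give $\sum_i\|(\bm{\D u}^{(m)})_i-\bm w^{(m)}_i\|_2\ge \sum_i\|\bm p^{(m)}_i\|_2-\sum_i\|(\bm{\D u}^{(m)})_i-\bm q^{(m)}_i\|_2\ge c_3\|\bm p^{(m)}\|_2-C\to\infty$ for a constant $C$ independent of $m$, so the $\alpha_0^{(k)}$ term diverges. In every case $\Phi\to+\infty$.

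Since $\Phi$ is also proper — bounded below by $0$ and finite, for instance at $(\bm u,\bm 0)$ whenever $(\bm{\A u})_i+\gamma_i>0$ for all $i$ — it is a proper, lower semicontinuous, coercive function on the nonempty closed set $\Omega\times\R^{2n}$, hence it attains its infimum; convexity additionally makes the set of minimizers convex. Passing back through the elimination of $\bm w$ yields the same conclusion for \eqref{eq:subuw2}. I expect the coercivity step to be the main obstacle, and within it the treatment of $\mathrm{null}(\bm{\mathcal E})$: along the direction ``$\bm u$ affine, $\bm w=\bm{\D u}$'' the regularizer $\mathcal R$ vanishes identically, so coercivity cannot come from $\mathcal R$ alone and genuinely requires combining it with the coercivity of the KL term in $\bm u$ — which in turn rests on the standard flux-type hypothesis on $\bm{\A}$ (non-negative entries, columns summing to one) underlying the results recalled at the beginning of the section; the two-regime argument above is precisely what disentangles these two contributions.
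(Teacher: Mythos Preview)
Your argument is essentially correct and, in fact, considerably more detailed than what the paper offers: the paper does not spell out a proof of this proposition at all, but simply invokes the known convexity/lower semicontinuity/seminorm properties of $\mathrm{TGV}^2$ from \cite{tgv,tgv2} together with the non-negativity, convexity and coercivity of the KL term recalled at the start of the section. Your route is the same in spirit (KL handles unbounded $\bm u$, the regularizer handles unbounded $\bm w$ once $\bm u$ is controlled), but you carry out the coercivity step by hand via the orthogonal decomposition $\bm w=\bm p+\bm q$ with $\bm p\in\mathrm{null}(\bm{\mathcal E})$, rather than importing it from the TGV literature. That buys you a self-contained finite-dimensional proof, at the cost of a little extra bookkeeping.

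One presentational slip to fix: your two subcases are swapped. You enumerate ``$\|\bm p^{(m)}\|_2\to\infty$ or $\|\bm q^{(m)}\|_2\to\infty$'' and then, under ``In the first subcase'', give the argument that actually needs $\|\bm q^{(m)}\|_2\to\infty$ (since $\bm{\mathcal E}\bm w^{(m)}=\bm{\mathcal E}\bm q^{(m)}$ and the injectivity bound on $\mathrm{null}(\bm{\mathcal E})^\perp$ yields $\sum_i\|(\bm{\mathcal E}\bm w^{(m)})_i\|_2\to\infty$). Your ``second subcase'' is stated correctly as $\|\bm q^{(m)}\|_2$ bounded and $\|\bm p^{(m)}\|_2\to\infty$, and the reverse-triangle estimate there is fine. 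The clean dichotomy is: either $\|\bm q^{(m)}\|_2\to\infty$ along a subsequence (your $\alpha_1^{(k)}$ argument), or $\|\bm q^{(m)}\|_2$ stays bounded and hence $\|\bm p^{(m)}\|_2\to\infty$ (your $\alpha_0^{(k)}$ argument). With that relabeling the coercivity proof is complete; your closing remark that the KL coercivity in $\bm u$ relies on the standard flux hypothesis on $\bm{\mathrm A}$ is also to the point.
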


\begin{proposition}
	The cost function in \eqref{eq:subal} is strictly convex on $\R_{++}^{2}$, hence it admits a unique global minimizer.
\end{proposition}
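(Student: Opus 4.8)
The plan is to verify the three stated properties—strict convexity, and the fact that they together imply a unique global minimizer—by decomposing the cost function in \eqref{eq:subal}. Writing it out, the objective is
\[
g(\alpha_0,\alpha_1)\;=\;\alpha_0\,S_0+\alpha_1\,S_1-(n+k_0-1)\ln\alpha_0+\frac{\alpha_0}{\theta_0}-(n+k_1-1)\ln\alpha_1+\frac{\alpha_1}{\theta_1},
\]
where $S_0:=\sum_{i=1}^n\|(\bm{\D u}^{(k+1)})_i-\bm{w}^{(k+1)}_i\|_2\ge 0$ and $S_1:=\sum_{i=1}^n\|(\bm{\mathcal{E} w}^{(k+1)})_i\|_2\ge 0$ are nonnegative constants depending only on the current iterate. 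First I would note that $g$ separates as $g(\alpha_0,\alpha_1)=g_0(\alpha_0)+g_1(\alpha_1)$ with $g_j(\alpha_j)=\alpha_j(S_j+1/\theta_j)-(n+k_j-1)\ln\alpha_j$ for $j\in\{0,1\}$, so it suffices to study each one-variable function $g_j$ on $\R_{++}$.

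Next I would establish strict convexity: $g_j''(\alpha_j)=(n+k_j-1)/\alpha_j^2>0$ on $\R_{++}$, since $n\ge 1$ and $k_j>1$ give $n+k_j-1>0$. Hence each $g_j$ is strictly convex, and the sum $g$ is strictly convex on the convex set $\R_{++}^2$; this is exactly the assertion (it also re-derives the positive-definiteness of the Hessian of $\mathcal{P}$ noted before the proposition). For existence of a minimizer, I would invoke coercivity: as $\alpha_j\to 0^+$ the $-\ln\alpha_j$ term drives $g_j\to+\infty$, while as $\alpha_j\to+\infty$ the linear term $\alpha_j(S_j+1/\theta_j)$ (whose coefficient is strictly positive because $1/\theta_j>0$, regardless of whether $S_j=0$) dominates the logarithm and again $g_j\to+\infty$. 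Thus $g_j$ is coercive and lower semi-continuous (indeed smooth) on $\R_{++}$, so it attains a minimum; equivalently, setting $g_j'(\alpha_j)=(S_j+1/\theta_j)-(n+k_j-1)/\alpha_j=0$ yields the explicit unique stationary point $\alpha_j^{(k+1)}=(n+k_j-1)/(S_j+1/\theta_j)$, which lies in $\R_{++}$. Finally, strict convexity of $g$ forces this minimizer to be unique, which completes the proof.

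There is essentially no hard obstacle here: the argument is a routine coercivity-plus-strict-convexity check, and the only point that deserves care is confirming that the objective does not degenerate when $S_0=0$ or $S_1=0$ (the pathological case flagged earlier for noninformative hyperpriors). The presence of the strictly positive term $\alpha_j/\theta_j$ coming from the gamma scale parameter is precisely what rules this out, so I would make sure to highlight that the existence of the minimizer relies on $\theta_j<\infty$, i.e.\ on having genuinely used the informative (gamma) hyperprior rather than a flat or Jeffrey's one. Beyond that, one only needs to recall that a strictly convex, coercive, lsc function on a convex subset of $\R^2$ has a unique global minimizer—a standard fact—and the proposition follows.
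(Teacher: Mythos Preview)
Your proposal is correct and follows essentially the same approach as the paper: the paper observes that with $(\bm{u},\bm{w})$ fixed the regularizer $\mathcal{R}$ is linear in $(\alpha_0,\alpha_1)$ while the penalty $\mathcal{P}$ has positive-definite Hessian and is coercive, and simply states the proposition on that basis. Your argument makes these steps explicit (and even derives the closed-form minimizer that the paper presents later in \eqref{eq:getal0}--\eqref{eq:getal1}), including the useful remark that $1/\theta_j>0$ is what rescues coercivity when $S_j=0$.
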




As our purpose is to design an automatic strategy, we also need to fix a criterion for the automatic selection of $\lambda$ in \eqref{eq:subuw2}. In the next section, we recall a popular strategy for the $\lambda$ estimation problem in presence of Poisson noise.

\subsection{Discrepancy principle under Poisson noise corruption}


Consider the variational model
\begin{equation} \label{eq:var_pois}
\bm{u}^*(\lambda) \in\arg\min_{\bm{u}\in\Omega}\left\{\lambda\,D_{KL}(\bm{\A u}+\bm{\gamma};\bm{b})+\mathcal{R}(\bm{u})\right\}\,,
\end{equation}
where in \eqref{eq:var_pois} we made explicit the dependence of the solution $\bm{u}^*$ on the parameter $\lambda$.

The extension of the Morozov Discrepancy Principle (DP), originally introduced for Gaussian noise corruption, to the Poisson degradation case reads:

\begin{equation}
\mathrm{select}\;\:\, \lambda \;{=}\; \lambda^* \in \R_{++} \;\;\, \mathrm{s.t.}\;\, 
\mathcal{D}\left(\lambda^*\,;\,\bm{b}\right) \;{=}\; \Delta  \, ,
\label{eq:DP0}
\end{equation}
where the last equality and the scalar $\Delta \in\R_{++}$ are commonly referred to as the \emph{discrepancy equation} and the \emph{discrepancy value}, respectively, while the \emph{discrepancy function} $\,\mathcal{D}(\,\cdot\,;\bm{b}): {\R_{++}} \to \R_+$ is defined as
%


\begin{equation}
\mathcal{D}\left(\lambda\,;\,\bm{b}\right)
\,\;{:=}\;\,
\sum_{i=1}^n \mathcal{D}_i\left(\lambda\,;\,b_i\right) , 
\label{eq:DP1}
\end{equation}
%
where
\begin{align}\label{eq:def_D}
\begin{split}
\mathcal{D}_i\left(\lambda\,;\,b_i\right) \;{:=}\;& F((\bm{\A u}^*(\lambda))_i+\gamma_i;b_i)\\
\;{=}\;&(\bm{\A u}^*(\lambda)+\bm{\gamma})_i - b_i\ln (\bm{\A u}^*(\lambda)+\bm{\gamma})_i +b_i\ln b_i - b_i\,,
\end{split}
\end{align}
with $F$ defined in \eqref{eq:KL2}.
In analogy to the Morozov DP, the discrepancy value $\Delta$ can be selected as the expected value of the KL fidelity term:
\begin{equation}
\mathrm{select}\;\:\, \lambda \;{=}\; \lambda^* \in \R_{++} \;\;\, \mathrm{s.t.}\;\,
\sum_{i=1}^n\mathcal{D}_i\left(\lambda^*;b_i\right) \;{=}\;\sum_{i=1}^n\mathbb{E}[F\left((\bm{\A u}^*(\lambda))_i+\gamma_i;b_i\right)]  \, .
\label{eq:DP1}
\end{equation}
However, getting an explicit expression for the expected values in \eqref{eq:DP1} is a very hard task. In \cite{Bertero2009,Bertero2010}, the authors consider the approximation
\begin{equation}\label{eq:approx}
\mathbb{E}[F\left((\bm{\A u}^*(\lambda))_i+\gamma_i;b_i\right)] \approx \frac{1}{2}\,,
\end{equation}
which comes from truncating the series expansion of $\mathbb{E}[F\left((\bm{\A u}^*(\lambda))_i+\gamma_i;b_i\right)]$ and yields the following formulation for the general principle outlined in \eqref{eq:DP0}, to which we will refer as Approximate DP (ADP):
\begin{equation}\tag{ADP}
\mathrm{select}\;\:\, \lambda \;{=}\; \lambda^* \in \R_{++} \;\;\, \mathrm{s.t.}\;\, 
\sum_{i=1}^n\mathcal{D}_i\left(\lambda^*;b_i\right) \;{=}\;\frac{n}{2}\,.
\label{eq:DP2}
\end{equation}  
For a specific, but not very restrictive, class of regularization terms,
it is possible to prove that there exists a
unique value $\lambda^*$ such that $\bm{u}^*(\lambda)$ satisfies the ADP - see \cite{Teuber,Benf_jmiv}. 
%



From the algorithmic viewpoint, the optimal $\lambda$ value can be sought in an \emph{a posteriori} manner, that is by computing the solution $\bm{u}^*(\lambda)$ of the general model \eqref{eq:var_mod} for different values of $\lambda$ and then selecting the one satisfying the constraint in \eqref{eq:DP2}. In alternative, when a closed form solution for \eqref{eq:var_pois} is not available, one can rather embed the ADP along the iterations of the alternating scheme employed for the solution of \eqref{eq:var_pois} \cite{Teuber,BF}. In the latter scenario, the solution of only one minimization problem is required.

Besides the desirable theoretical guarantees and the computational efficiency, it is well established that the ADP is not capable of achieving high-quality restorations in low-count regimes, that is when the number of photons allowed to hit the image domain is very low. This can be ascribed to the approximation in \eqref{eq:approx}, which is particularly rough in presence of few photons. Several strategies aimed at refining the estimate in \eqref{eq:approx} have been explored \cite{Bonettini_2014,bevi}. Despite the more robust behavior of these approaches in low-count regimes, they do not benefit from the theoretical guarantees that ADP brings along. More specifically, it is not trivial, or even not possible, to prove the uniqueness of the value of $\lambda$ satisfying the general instance of the DP in \eqref{eq:DP0}.

As the focus of our work is on the estimation of the parameters in the TGV$^2$ term, with the $\lambda$ estimation problem being a side effect, we will adopt the ADP in its iterated version. 
However, we remark that the numerical method detailed in Section \ref{sec:num} can be thought of as a machinery in which the $\lambda$-update strategy can be replaced by any of the other mentioned approaches.

\bigskip

\section{Numerical method\label{sec:num}}

In this section, we present the numerical procedure adopted for the solution of \eqref{eq:subuw}-\eqref{eq:subal}. First, we provide some details on the initialization strategy that allows us to compute the values $\alpha_0^{(0)}$ and $\alpha_1^{(0)}$ and to set the parameters of the gamma hyperpriors that arise in the penalty term $\mathcal{P}$ in \eqref{eq:pen2}. Then, we tackle the update of the variables of primary interest, i.e. $\bm{u}$ and $\bm{w}$ in \eqref{eq:subuw}, by means of an ADMM-based scheme where the regularization parameter $\lambda$ is automatically adjusted along the iterations according to the ADP. Finally, we detail how to perform the update of the unknown parameters $\alpha_0$ and $\alpha_1$ in \eqref{eq:subal}.

\subsection{Initialization of the alternating scheme\label{sec:init}}

We start by computing a suitable initial guess for the alternating scheme by solving the TV-KL model in \eqref{eq:tvKL} where the regularization parameter $\lambda$ is automatically selected according to the Discrepancy Principle described above \cite{Teuber}. 

The TV-regularized solution, which has been already adopted for the same purpose in image denoising problems~\cite{carola}, produces a twofold benefit: on one hand, it is expected to fasten the convergence of the alternating method \eqref{eq:subuw}-\eqref{eq:subal}, on the other, it allows us to reasonably set the parameters $k_0,\theta_0,k_1,\theta_1$ and the initial values $\alpha_0^{(0)}$, $\alpha_1^{(0)}$.


Denoting by $\bm{u}^{(0)}$ the solution of the TV-KL model coupled with the ADP criterion and setting $\bm{w}^{(0)}=\bm{\D u^{(0)}}$, we apply the Maximum Likelihood (ML) strategy for the estimation of parameters of an exponential distribution on the set of samples $z_u(\bm{b};\bm{w}^{(0)})$ and $z_w(\bm{w}^{(0)})$ defined in \eqref{eq:zu} and \eqref{eq:zw}, respectively. In other words, we select $\alpha_0^{(0)}$ and $\alpha_1^{(0)}$ by solving
\begin{equation}
\alpha_0^{(0)} = \argmax_{\alpha_0\,\in\,\R_{++}} \pi_0(z_u(\bm{b};\bm{w}^{(0)})\mid \alpha_0)\,,\quad \alpha_1^{(0)} = \argmax_{\alpha_1\,\in\,\R_{++}} \pi_0(z_w(\bm{w}^{(0)})\mid\alpha_1)\,.
\end{equation}
After turning the above maximization problems into minimization ones by taking the negative logarithm of the likelihood cost functions, and imposing a first-order optimality condition, we get
\begin{equation}
\alpha_0^{(0)} \;{=}\;  \left(\frac{1}{n}\sum_{i=1}^n\|\bm{w}_i^{(0)}-(\bm{\D b})_i\|_2\right)^{-1}\,,\quad
\alpha_1^{(0)} \;{=}\;  \left(\frac{1}{n}\sum_{i=1}^n\|(\bm{\mathcal{E} w}^{(0)})_i\|_2\right)^{-1}\,.
\end{equation}
The parameters $k_0,\theta_0,k_1,\theta_1$ of the gamma hyperpriors are fixed by selecting $\alpha_0^{(0)}$ and $\alpha_1^{(0)}$ as the modes of the distributions and by setting the standard deviations equal to $10^{-3}$. In other words, recalling the definitions given in \eqref{eq:defms}, we have
\begin{equation}
\theta_j(k_j-1) = \alpha_j^{(0)}\,,\quad \sqrt{k_j}\,\theta_j = 10^{-3}\,,\quad j=0,1\,.    
\end{equation}

\subsection{Updating $\bm{u}$ and $\bm{w}$\label{sec:admm}}

Now we consider the problem 
\begin{align}
\label{eq:subuwunc}
\begin{split}
\left\{\widehat{\bm{u}},\widehat{\bm{w}}\right\}\;{\in}\;&\arg\min_{\bm{u},\bm{w}}\big\{\lambda\,D_{KL}(\bm{\mathrm{A}u}+\bm{\gamma};\bm{b}) \,+\,\alpha_0\,\sum_{i=1}^n\|(\bm{\D u})_i-\bm{w}_i\|_2\\
&\phantom{XXXX}\,+\,\alpha_1\,\sum_{i=1}^n\|(\bm{\mathcal{E} w})_i\|_2\,+\,\iota_{\Omega}(\bm{u})\big\}\, ,
\end{split}
\end{align}
where we dropped out the superscript indices indicating the alternating scheme iteration for the sake of better readability, and where the parameters $\alpha_0,\alpha_1$ are fixed, while the regularization parameter is adjusted along the iterations of the iterative scheme so as to satisfy the ADP.


Problem~\eqref{eq:subuwunc} can be equivalently written in the following constrained form:
\begin{align}\label{eq:TGVKLcon}
\begin{split}
\{\widehat{\bm{u}},\widehat{\bm{w}}\}\;{\in}\;\arg\min_{\bm{u},\bm{w},\bm{z}_1,\bm{z}_2,\bm{z}_3,\bm{z}_4}\bigg\{&
\lambda D_{KL}(\bm{z}_1+\bm{\gamma};\bm{b})+\alpha_0\sum_{i=1}^n\|(\bm{z}_2)_i\|_2\\
&+\alpha_1\sum_{i=1}^n\|(\bm{z}_3)_i\|_2+\iota_{\Omega}(\bm{z}_4)
\bigg\}\\
\text{s.t.}\;\; \bm{z}_1\;{=}\; &\bm{\A u}\\
\phantom{\text{s.t.}\;\; }\bm{z}_2\;{=}\; &\bm{\D u}-\bm{w}\\
\phantom{\text{s.t.}\;\; }\bm{z}_3\;{=}\; &\bm{\mathcal{E}w }\\
\phantom{\text{s.t.}\;\; }\bm{z}_4\;{=}\; &\bm{u}\,,
\end{split}
\end{align}
where $\bm{z}_1,\bm{z}_4\in \R^n$, $\bm{z}_2\in\R^{2n}$, $\bm{z}_3\in\R^{4n}$ are auxiliary variables, $(\bm{z}_2)_i = ((\bm{\D}_h\bm{u})_i,(\bm{\D}_v\bm{u})_i)$ and $(\bm{z}_3)_i = ((\bm{\D}_h\bm{u})_i,(1/2)((\bm{\D}_h\bm{u})_i+(\bm{\D}_v\bm{u})_i),(1/2)((\bm{\D}_h\bm{u})_i+(\bm{\D}_v\bm{u})_i),(\bm{\D}_v\bm{u})_i)$.

By following the computations carried out in \cite{DLV_dtgv}, briefly reported here for the sake of completeness, and after introducing the variables $\bm{x}:=(\bm{u},\bm{w})$ and $\bm{z}:=(\bm{z}_1,\bm{z}_2,\bm{z}_3,\bm{z}_4)$ and the matrices
\begin{equation}
\bm{\mathrm{H}}=\left[
\begin{array}{cc}
\bm{\A} & \bm{0} \\
\bm{\D} & -\bm{\I}_{2n}\\
\bm{0}&\bm{\mathcal{E}}\\
\bm{\I}_n&\bm{0}
\end{array}
\right]\,,\quad   \bm{\G}=\left[
\begin{array}{cccc}
-\bm{\I}_n & \bm{0}& \bm{0}& \bm{0} \\
\bm{0} & -\bm{\I}_{2n} & \bm{0}& \bm{0}\\
\bm{0}& \bm{0}&-\bm{\I}_{4n}& \bm{0}\\
\bm{0}& \bm{0}& \bm{0}&-\bm{\I}_n
\end{array}
\right]\,,
\end{equation}
problem~\eqref{eq:TGVKLcon} can be reformulated as follows:
\begin{align}\label{eq:modelnew}
\begin{split}
\left\{\widehat{\bm{x}},\widehat{\bm{z}}\right\}\in&\arg\min_{\bm{x},\bm{z}}\left\{F_1(\bm{x})+F_2(\bm{z})\right\}\\
&\text{s.t.}\; \;\bm{\mathrm{H}x}+\bm{\G z}=0\,,
\end{split}
\end{align}
where
\begin{align}
F_1(\bm{x})\;{=}\;& 0,\\
F_2(\bm{z})\;{=}\;&\lambda D_{KL}(\bm{z}_1)+\alpha_0\sum_{i=1}^n\|(\bm{z}_2)_i\|_2+\alpha_1\sum_{i=1}^n\|(\bm{z}_3)_i\|_2+\iota_{\Omega}(\bm{z}_4)\,.
\end{align}
The augmented Lagrangian associated to problem \eqref{eq:modelnew} reads
\begin{equation}\label{eq:AL}
\mathcal{L}(\bm{x},\bm{z},\bm{\zeta};\rho) = F_1(\bm{x})+F_2(\bm{z})+\langle\bm{\zeta},\bm{\mathrm{H}x}+\bm{\G z}\rangle+\frac{\rho}{2}\|\bm{\mathrm{H}x}+\bm{\G z}\|_2^2\,,
\end{equation}
where $\bm{\zeta}=(\bm{\zeta}_1,\bm{\zeta}_2,\bm{\zeta}_3,\bm{\zeta}_4) \in\R^{8n}$ 
is the vector of Lagrange multipliers associated to the linear constraints in~\eqref{eq:modelnew} and $\rho>0$ is a penalty parameter.

Solving problem \eqref{eq:modelnew} amounts to seek the saddle point for the augmented Lagrangian, which can be done by employing the popular Alternating Direction Method of Mutipliers (ADMM)~\cite{admm}. 
The $t$-th iteration of the ADMM
reads:
\begin{align}
\label{eq:x_ADMM}
\bm{x}^{(t+1)}\;{\in}\;&\arg\min_{\bm{x}\,\in\,\R^{3n}}\mathcal{L}(\bm{x},\bm{z}^{(t)},\bm{\zeta}^{(t)};\rho)\\
\label{eq:z_ADMM}
\bm{z}^{(t+1)}\;{\in}\;&\arg\min_{\bm{z}\,\in\,\R^{8n}}\mathcal{L}(\bm{x}^{(t+1)},\bm{z},\bm{\zeta}^{(t)};\rho)\\
\label{eq:getzeta}
\bm{\zeta}^{(t+1)}\;{=}\;&\bm{\zeta}^{(t)}\;{+}\; \rho\left(\bm{\mathrm{H}}\bm{x}^{(t+1)}+\bm{\mathrm{G}}\bm{z}^{(t+1)}\right)\,.
\end{align}
Now we detail how to solve \eqref{eq:x_ADMM}-\eqref{eq:z_ADMM} and how to set the regularization parameter $\lambda$ so that the inner iterate $\bm{u}^{(t)}$ satisfies the ADP.

\subsection{The $\bm{x}$-subproblem}

Recalling that $F_1(\bm{x})=0$, we have that problem~\eqref{eq:x_ADMM} reads
\begin{equation}
\bm{x}^{(t+1)}\;{\in}\;\arg\min_{\bm{x}\,\in\,\R^{3n}}\left\{\langle\bm{\zeta}^{(t)},\bm{\mathrm{H}x}-\bm{ z}^{(t)}\rangle+\frac{\rho}{2}\|\bm{\mathrm{H}x}-\bm{ z}^{(t)}\|_2^2\right\}\,,
\end{equation}
where the constant terms can be re-adjusted so as to give
\begin{equation}\label{eq:subx}
\bm{x}^{(t+1)}\;{\in}\;\arg\min_{\bm{x}\in\R^{3n}}\left\{  \|\bm{\mathrm{H}x}\;{-}\;\bm{v}^{(t)}\|_2^2\right\}\,,\quad \mathrm{with}\quad \bm{v}^{(t)}\;{=}\;\bm{z}^{(t)}-\frac{\bm{\zeta}^{(t)}}{\rho}\,.
\end{equation}
Solving problem~\eqref{eq:subx} amounts to solving the following normal equations:
\begin{equation}\label{eq:getuw}
\bm{\mathrm{H}}^T    \bm{\mathrm{H}}    \bm{x} = \bm{\mathrm{H}}^T \bm{v}^{(t)}\,.
\end{equation}
Under the adoption of periodic boundary conditions, the matrices $\bm{\A}$, $\bm{\D_h}$ and $\bm{\D_v}$ are Block Circulant with Circulant Blocks (BCCB), and they can be thus diagonalized by means of the 2D Fast Fourier Transform (FFT). By exploiting this property, which is block-wise inherited by the matrix $\bm{\mathrm{H}}^T\bm{\mathrm{H}}$, in \cite{DLV_dtgv} the authors have shown that the update of $\bm{x}$, that is the update of $\bm{u}$ and $\bm{w}=(\bm{w_1},\bm{w}_2)$, can be obtained in a very efficient manner by three applications of the forward 2D FFT and three applications of the inverse 2D FFT. We refer the reader to \cite{DLV_dtgv} for the extended computations.

\subsection{The $\bm{z}$-subproblem}

The subproblems for the update of $\bm{z}_1$, $\bm{z}_2$, $\bm{z}_3$ and $\bm{z}_4$ can be addressed separately. Moreover, one can immediately notice that the regularization parameter $\lambda$ is only involved in the subproblem concerning $\bm{z}_1$, which, after the introduction of the auxiliary vector $\bm{q}^{(t)}=\bm{\mathrm{H}}\bm{x}^{(t+1)}+\bm{\zeta}^{(t)}/\rho$, reads:
\begin{equation}\label{eq:z1upd}
\bm{z_1}^{(t+1)}(\tau)\;{\in}\; \arg\min_{\bm{z}_1\,\in\,\R^n}\left\{\lambda\,D_{KL}(\bm{z}_1\,+\,\bm{\gamma};\bm{b})\;{+}\;\frac{1}{2}\|\bm{z}_1-\bm{q}_1^{(t)}\|_2^2\right\}\,,
\end{equation}
with $\bm{q}_1\in\R^n$ being the sub-vector of $\bm{q}$ corresponding to the first constraint in \eqref{eq:TGVKLcon}, and $\tau:=\lambda/\rho$. Notice that in \eqref{eq:z1upd} we made explicit the dependence of $\bm{z}_1$ on $\tau$, i.e. on $\lambda$. 

Problem \eqref{eq:z1upd} is separable, so that, after dropping out the constant terms, one has to tackle the solution of $n$ one-dimensional minimization problems of the form
\begin{equation}
\label{eq:zupd}
z_i^{(t+1)}(\tau)\;{=}\;\arg\min_{z_i\,\in\,\R}\bigg\{\tau[(z_i+\gamma_i)-b_i \ln(z_i+\gamma_i)  ]+\frac{1}{2}(z_i-q_i^{(t)})^2
\bigg\}.
\end{equation}
where we set $z_i:=(\bm{z}_1)_i$ and $q_i = (\bm{q}_1)_i$ to avoid heavy notations. By imposing a first order optimality condition on the convex objective function in \eqref{eq:zupd}, we get
\begin{equation}\label{eq:getz}
z_i^{(t+1)}(\tau) = \frac{1}{2}\left[-(\tau+\gamma_i-q_i^{(t)})+\sqrt{(\tau+\gamma_i-q_i^{(t)})^2+4\tau\left(\frac{q_i^{(t)}\,\gamma_i}{\tau}+b_i-\gamma_i\right)
}\,\right]\,.
\end{equation}
Hence, after setting a value for the parameter $\tau$, the variable $\bm{z}_1$ can be updated by computing \eqref{eq:getz} for all $i=1,\ldots,n$.

As previously done in \cite{BF,Teuber}, the DP can be imposed along the ADMM iterations so that the optimal $\tau$ is selected as the one solving the non-linear discrepancy equation
\begin{equation}\label{eq:getlam}
\sum_{i=1}^n F(z_i^{(t+1)}(\tau);b_i)=\frac{n}{2}\,,
\end{equation}
that is
\begin{equation}\label{eq:DPeq}
\sum_{i=1}^n \left(b_i\ln b_i-b_i\ln(z_i(\tau)+\gamma_i)+(z_i(\tau)+\gamma_i)-b_i\right)-\frac{n}{2}  =0\,.
\end{equation}
The discrepancy function in the right hand-side of \eqref{eq:DPeq} can be proven to be a convex and decreasing function of $\tau$ \cite{BF}, so that equation \eqref{eq:DPeq} admits a solution that can be sought by means of the Newton-Raphson algorithm.

The update of $\bm{z}_2$ and $\bm{z}_3$ relies on the computation of the proximal operator 
of the function $\|\cdot\|_2$ at a given vector $\bm{f}$: 
\begin{equation}
\mathrm{prox}_{c\|\cdot\|_2}(\bm{f}) = \arg\min_{\bm{g}}\left\{c\|\bm{f}\|_2\;{+}\;\frac{1}{2}\|\bm{g}-\bm{f}\|_2^2\right\}\,,
\end{equation}
where $c$ is a given constant. We have
\begin{align}\label{eq:z2upd}
\bm{z}_2^{(t+1)}\;{\in}\;&\arg\min_{\bm{z}_2\,\in\,\R^{2n}}\left\{\alpha_0\sum_{i=1}^n\|(\bm{z}_2)_i\|_2\;{+}\;\frac{\rho}{2}\|\bm{z}_2-\bm{q}_2^{(t)}\|_2^2\right\}\,,\\
\label{eq:z3upd}
\bm{z}_3^{(t+1)}\;{\in}\;&\arg\min_{\bm{z}_3\,\in\,\R^{4n}}\left\{\alpha_1\sum_{i=1}^n\|(\bm{z}_3)_i\|_2\;{+}\;\frac{\rho}{2}\|\bm{z}_3-\bm{q}_3^{(t)}\|_2^2\right\}\,,
\end{align}
with $\bm{q}_2$ and $\bm{q}_3$ being the sub-vectors of $\bm{q}$ corresponding to the second and third constraint in \eqref{eq:TGVKLcon}, respectively.

Problems \eqref{eq:z2upd} and \eqref{eq:z3upd} can be split into $n$ two-dimensional and $n$ four-dimensional subproblems, respectively, whose general form is
\begin{align}\label{eq:z2updi}
(\bm{z}_2)_i^{(t+1)}\;{\in}\;&\arg\min_{(\bm{z}_2)_i\,\in\,\R^{2}}\left\{\frac{\alpha_0}{\rho}\|(\bm{z}_2)_i\|_2\;{+}\;\frac{\rho}{2}\|(\bm{z}_2)_i-(\bm{q}_2)_i^{(t)}\|_2^2\right\}\;{=}\;\mathrm{prox}_{(\alpha_0/\rho)\|\cdot\|_2}((\bm{q}_2)_i^{(t)})\,,\\
\label{eq:z3updi}
(\bm{z}_3)_i^{(t+1)}\;{\in}\;&\arg\min_{(\bm{z}_3)_i\,\in\,\R^4}\left\{\frac{\alpha_1}{\rho}\|(\bm{z}_3)_i\|_2\;{+}\;\frac{\rho}{2}\|(\bm{z}_3)_i-(\bm{q}_3)_i^{(t)}\|_2^2\right\}\;{=}\;\mathrm{prox}_{(\alpha_1/\rho)\|\cdot\|_2}((\bm{q}_3)_i^{(t)})\,,
\end{align}
which yields (see, e.g., \cite[Chapter 6]{prox})
\begin{align}\label{eq:getz2i}
(\bm{z}_2)_i^{(t+1)}\;{=}\;&\max\left( 1\;{-}\;\frac{\alpha_0}{\rho}\frac{1}{\|(\bm{q}_2)_i^{(t)}\|_2},0 \right)(\bm{q}_2)_i^{(t)}\,,\\
\label{eq:getz3i}
(\bm{z}_3)_i^{(t+1)}\;{=}\;&\max\left( 1\;{-}\;\frac{\alpha_1}{\rho}\frac{1}{\|(\bm{q}_3)_i^{(t)}\|_2},0 \right)(\bm{q}_3)_i^{(t)}\,.
\end{align}

Finally, the $\bm{z}_4$-subproblem reads
\begin{equation}
\bm{z}_4^{(t+1)}\;{\in}\;\arg\min_{\bm{z}_4\,\in\,\Omega}\left\{\|\bm{z}_4-\bm{q}_4^{(t)}\|_2^2\right\}\,,
\end{equation}
with $\bm{q}_4$ being the subvector of $\bm{q}$ corresponding to the fourth constraint in \eqref{eq:TGVKLcon}.
The solution of the above minimization problem is given by the unique Euclidean projection of vector $\bm{q}_4^{(t)}$ on the (convex) non-negative orthant and admits the simple closed-form
expression
\begin{equation}\label{eq:getz4i}
(\bm{z}_4)_i^{(t+1)}\;{=}\;\max\left((\bm{q}_4)_i^{(t)},0\right)\,,\quad i=1,\ldots,n\,.
\end{equation}



\subsection{Updating $\alpha_0$ and $\alpha_1$}

One can easily notice that the updating step for $(\alpha_0,\alpha_1)$ can be split into the two minimization problems
\begin{align}\label{eq:upda0}
\alpha_0^{(k+1)}\;{\in}\;&\arg\min_{\alpha_0\,\in\,\R_{++}}\{f(\alpha_0;z_u(\bm{u}^{(k+1)};\bm{w}^{(k+1)}),k_0,\theta_0)\}\\
\label{eq:upda1}
\alpha_1^{(k+1)}\;{\in}\;&\arg\min_{\alpha_1\,\in\,\R_{++}}\{f(\alpha_1;z_w(\bm{w}^{(k+1)}),k_1,\theta_1)\}\,,
\end{align}
where
\begin{equation}
\label{eq:funf}
f: \alpha\in\R_{++} \to f(\alpha;\bm{a},k,\theta)\in\R\,,\quad f(\alpha;\bm{a},k,\theta) \;{=}\; \alpha \sum_{i=1}^n\|\bm{a}_i\|_2-(n+k-1)\ln\alpha+\frac{\alpha}{\theta}\,, 
\end{equation}
and $z_u$, $z_w$ are defined in \eqref{eq:zu}, \eqref{eq:zw}.
%
%
%
The function $f$ in \eqref{eq:funf} is continuous and convex, hence it admits a unique global minimizer. Therefore, the updates in \eqref{eq:upda0} and \eqref{eq:upda1} can be computed by imposing a first-order optimality condition on $f$. In formula:
\begin{equation}
f'(\alpha) \;{=}\;0\,,\quad f'(\alpha)\;{=}\;\sum_{i=1}^n\|\bm{a}_i\|_2\,-\,(n+k-1)\,\frac{1}{\alpha}\,+\,\frac{1}{\theta}=0\,,
\end{equation}
which yields
\begin{align}\label{eq:getal0}
\alpha_0^{(k+1)} \;{=}\;& (n+k_0-1) \left(\sum_{i=1}^n\|(\bm{\D u}^{(k+1)})_i-(\bm{w}^{(k+1)})_i\|_2\,+\,\frac{1}{\theta_0}\right)^{-1}\\
\label{eq:getal1}
\alpha_1^{(k+1)} \;{=}\;& (n+k_1-1) \left(\sum_{i=1}^n\|(\bm{w}^{(k+1)})_i\|_2\,+\,\frac{1}{\theta_1}\right)^{-1}\,.
\end{align}
The overall numerical procedure is summarized in Algorithm \ref{alg:2}.

\begin{algorithm}
	\caption{Alternating scheme for the TGV$^2$-KL model with automatic parameter estimation}
	\vspace{0.2cm}
	{\renewcommand{\arraystretch}{1.2}
		\renewcommand{\tabcolsep}{0.0cm}
		\vspace{-0.08cm}
		\begin{tabular}{ll}
			\textbf{inputs}:      & observed image $\,\bm{b}\in \mathbb{N}^n$, emission background $\bm{\gamma}\in\R_{++}^n$, \vspace{0.04cm}  \\
			&regularization operators $\bm{\D}\in\R^{2n \times n}$, $\bm{\mathcal{E}}\in\R^{4n \times 2n}$\vspace{0.04cm} \\
			& blur operator $\bm{\mathrm{A}}\in\R^{n\times n}$, penalty parameter $\rho\in\R_{++}$
			\vspace{0.04cm} \\
			\textbf{output}:$\;\;$     & estimated restored image $\,{\bm{u}} ^*\in \R^n$ \vspace{0.2cm} \\
		\end{tabular}
	}
	\vspace{0.1cm}
	{\renewcommand{\arraystretch}{1.2}
		\renewcommand{\tabcolsep}{0.0cm}
		\begin{tabular}{rcll}
			1. & $\quad$ & \multicolumn{2}{l}{\textbf{initialize:}$\;\;$ 
				set $\,\bm{u}^{(0)}=\bm{b}$, $\,\bm{w}^{(0)}=\bm{\D u}^{(0)}$, $\alpha_0^{(0)}$, $\alpha_1^{(0)}$  } \vspace{0.05cm}\\
			2. && \multicolumn{2}{l}{\textbf{for} $\;$ \textit{k = 0, 1, 2, $\, \ldots \,$ until convergence $\:$} \textbf{do}} \vspace{0.1cm}\\
			3. && \multicolumn{2}{l}{$\quad\;\;$ {\textbf{for} $\;$ \textit{t = 0, 1, 2, $\, \ldots \,$ until convergence $\:$}\textbf{do}} } \vspace{0.05cm} \\
			4. && $\quad\qquad\;\;\bf{\cdot}$ {compute $\:\bm{u}^{(t+1)},\bm{w}^{(t+1)}$}   & 
			by solving the linear system \eqref{eq:getuw} in $\bm{x}$\vspace{0.05cm} \\
			5. && $\quad\qquad\;\;\bf{\cdot}$ {update $\:\,\;\;\lambda$ }   & 
			by solving the non-linear equation \eqref{eq:getlam} \vspace{0.05cm} \\
			6. && $\quad\qquad\;\;\bf{\cdot}$ {compute $\:\,\bm{z}_1^{(t+1)}$ }   & 
			by \eqref{eq:getz} \vspace{0.05cm} \\
			7. && $\quad\qquad\;\;\bf{\cdot}$ {compute $\:\,\bm{z}_2^{(t+1)}, \bm{z}_3^{(t+1)}, \bm{z}_4^{(t+1)}$ }   & 
			by \eqref{eq:getz2i}, \eqref{eq:getz3i}, \eqref{eq:getz4i}, respectively \vspace{0.05cm} \\
			8. && $\quad\qquad\;\;\bf{\cdot}$ {update $\:\,\bm{\zeta}^{(t+1)}$ }   & by \eqref{eq:getzeta}\\
			9. && \multicolumn{2}{l}{$\quad\;\;$ {\textbf{end for} } } \vspace{0.05cm} \\
			10. && \multicolumn{2}{l}{$\quad \;\;$ ${\bm{u}}^{(k+1)} = \bm{u}^{(t+1)},\, {\bm{w}}^{(k+1)} = \bm{w}^{(t+1)}$}
			\\
			11. && $\quad\;\;\bf{\cdot}$ {compute $\:\alpha_0^{(k+1)}$}   & 
			by \eqref{eq:getal0}\vspace{0.05cm} \\
			12. && $\quad\;\;\bf{\cdot}$ {compute $\:\alpha_1^{(k+1)}$}   & 
			by \eqref{eq:getal1}\vspace{0.05cm} \\
			13. && \multicolumn{2}{l}{\textbf{end for} $\;$  } \vspace{0.1cm}\\
			14. && \multicolumn{2}{l}{ $\; {\bm{u}}^*=\bm{u}^{(k+1)}$  } \vspace{0.1cm}
		\end{tabular}
	}
	\label{alg:2}
\end{algorithm}

\section{Computational results\label{sec:ex}}

In this section we evaluate the performance of the proposed strategy for the automatic estimation of the parameters $\alpha_0$ and $\alpha_1$. 
As our proposal makes use of the ADP as criterion for the estimation of the regularization parameter $\lambda$, we first evaluate the robustness of the employed principle when solving the TGV$^2$-KL model with fixed $\alpha_0$ and $\alpha_1$.

The quality of the restoration $\bm{u}^*$ with respect to the original image $\bm{u}$ will be measured in terms of the Improved Signal-to-Noise Ratio (ISNR),
\begin{equation}
\mathrm{ISNR}(\bm{b},\bm{u},\bm{u}^*) \;{:=}\; 10\,\log_{10}\frac{\|\bm{b}-\bm{u}\|_2^2}{\|\bm{u}^*-\bm{u}\|_2^2}\,, 
\end{equation}
and of the Structural Similarity Index (SSIM) \cite{ssim}. The output $\bm{u}^*$ of the procedure outlined in Algorithm~\ref{alg:2} is compared with the target image $\bar{\bm{u}}$, which throughout this section denotes the solution of the TGV$^2$-KL model obtained by manually fixing the parameters so as to achieve the highest ISNR value\footnote{The \emph{highest} ISNR has to be intended as the highest value achieved for the parameters varying on selected grids with accuracy $10^{-3}$.}. We remark that when solving the TGV$^2$-KL model 
with fixed parameters $\alpha_0,\alpha_1$, the regularization parameter $\lambda$ is somehow superfluous as $\alpha_0,\alpha_1$ can be tuned so as to get an `optimal' balance between the KL fidelity term and the TGV$^2$ regularizer. Thus, $\bar{\bm{u}}$ has been computed by keeping $\lambda=1$. 




The ADMM with the adaptive adjustment of the regularization parameter $\lambda$ has been implemented starting from the implementation of the method used in \cite{DLV_dtgv,DLV_tgv}, available as a MATLAB code at \texttt{https://github.com/diserafi/respond}.
The outer iterations of the alternating scheme as well as the inner ADMM iterations are stopped as soon as
\begin{equation}
\delta_{\bm{u}}^{(k)} \;{:=}\; \frac{\|\bm{u}^{(k)}-\bm{u}^{(k-1)}\|_2}{\|\bm{u}^{(k-1)}\|_2}\;{<}\;10^{-5}\,.
\end{equation}
The ADMM penalty parameter $\rho$ is set so as to fasten the convergence. In most of the examples, it is $\rho=0.1$.

The experiments have been performed using MATLAB R2019b under Windows 10 on an ASUS PC with a 2.50 GHz Intel Core i7 6500U processor and 8 GB of RAM.

\subsection{(I)ADP for the TGV$^2$-KL image restoration model}

We assess the robustness of the ADP employed for the automatic estimation of $\lambda$ when the parameters $\alpha_0$ and $\alpha_1$ are fixed. More specifically, we are going to show that:
\begin{itemize}
	\item[-] in the \emph{a-posteriori} scenario, the discrepancy equation in \eqref{eq:DP2} admits a unique solution;
	\item[-] when applying the ADP along the ADMM iterations, the output solution is very close to (if not practically the same as) the one obtained when applying the criterion a posteriori.
\end{itemize}
In the following, we refer to the iterated version of the ADP as IADP.

We consider the restoration of three different test images, namely \texttt{phantom} in Figure \ref{fig:ph_or}, which is a predominantly smooth image with edges, \texttt{penguin} in Figure \ref{fig:peng_or}, which is a natural image with smooth and piece-wise constant features, and the medical test image \texttt{brain} in Figure \ref{fig:br_or}, presenting fine smooth details.

\begin{figure}
	\centering
	\begin{subfigure}[t]{0.32\textwidth}
		\centering
		\includegraphics[height=4.2cm]{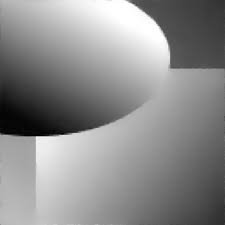}\caption{}
		\label{fig:ph_or}
	\end{subfigure}
	\begin{subfigure}[t]{0.32\textwidth}
		\centering
		\includegraphics[height=4.2cm]{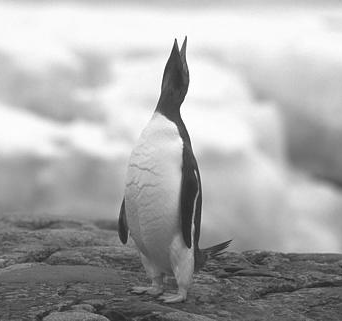}
		\caption{}
		\label{fig:peng_or}
	\end{subfigure}
	\begin{subfigure}[t]{0.32\textwidth}
		\centering
		\includegraphics[height=4.2cm]{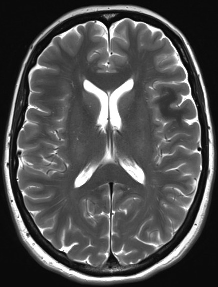}\caption{}
		\label{fig:br_or}
	\end{subfigure}
	\caption{Gray scale test images considered in the experiments: \texttt{phantom} ($225\times 225$) (a), \texttt{penguin} ($321\times 342$) (b), and \texttt{brain} ($287\times 218$) (c).}
	\label{fig:or}
\end{figure}

The original images, with pixel values between 0 and 1, were scaled by a factor $\kappa=50$ indicating the maximum number of photons allowed to hit the image domain. Then, a Gaussian blur, generated by the MATLAB routine \texttt{fspecial} with parameters \texttt{band} = 5 and \texttt{sigma} = 1, was applied to the scaled images. The \texttt{band} parameter represents the side length (in pixels) of the square support of the kernel, whereas \texttt{sigma} is the standard deviation of the circular, zero-mean, bivariate Gaussian pdf representing the kernel in the continuous setting. A constant background $\gamma=2\times 10^{-3}$ has been added to the blurred images, which have been further degraded with a Poisson noise.

For each test image we solved the TGV$^2$-KL model for different values of $\lambda$ and fixed parameters $\alpha_0,\alpha_1$. 
For each restored solution $\bm{u}^*(\lambda)$, we computed the corresponding value of the discrepancy function in \eqref{eq:def_D} and checked whether it intersected the horizontal line corresponding to $\Delta = n/2$ - see \eqref{eq:DP2}. The behavior of the discrepancy function over the selected grids of $\lambda$-values is reported on the left column of Figure \ref{fig:discr} for the three test images. One can notice that the experimental discrepancy function is decreasing and admits a unique intersection with the horizontal dashed blue line, representing the discrepancy value $\Delta = n/2$. The value of $\lambda$ satisfying the ADP is indicated by the vertical solid magenta line. In the discrepancy plots, we also report with a vertical dashed green line the value of $\lambda$ achieved by the IADP along the ADMM iterations. One can observe that the ADP and the IADP detect very close values of $\lambda$.

As a further analysis of the robustness of the IADP, we also report 
on the right column of Figure~\ref{fig:discr} the behavior of the parameter $\lambda$ along the ADMM iterations, whence one can observe that $\lambda$ starts stabilizing after 100 iterations.

\begin{figure}
	\centering
	\begin{tabular}{cc}
		\includegraphics[height=4.2cm]{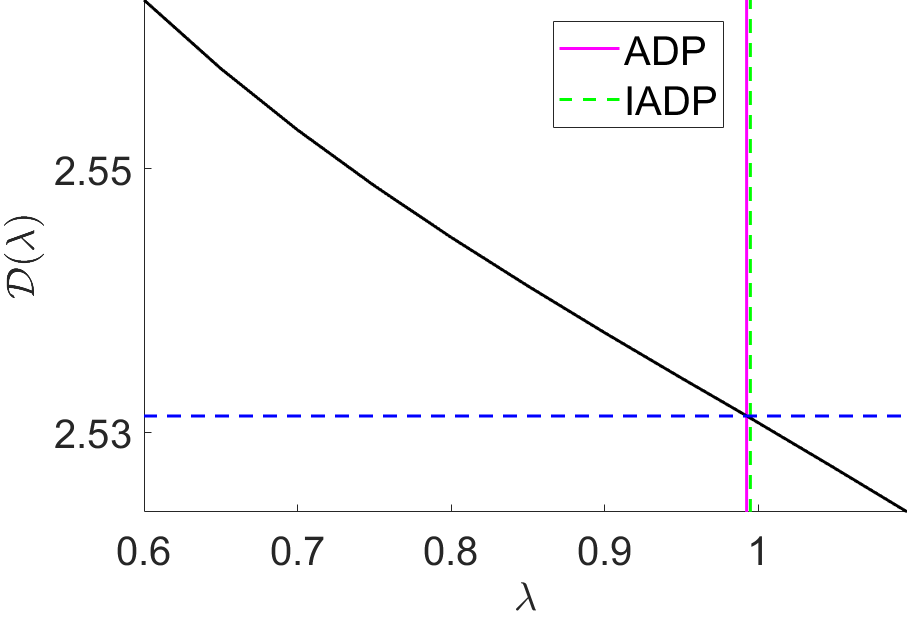}
		& \includegraphics[height=4.2cm]{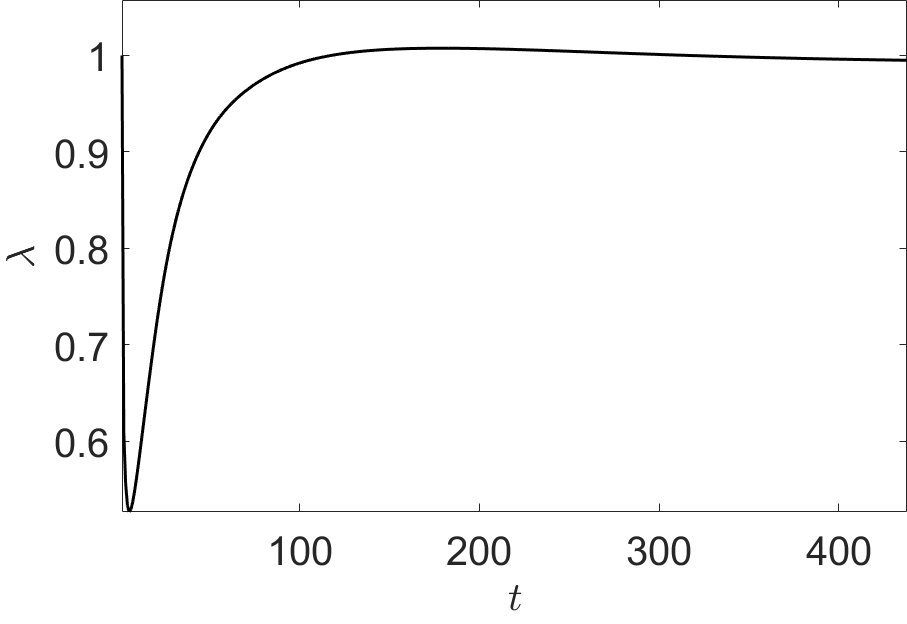}
		\\
		\multicolumn{2}{c}{(a) \texttt{phantom}}\\[12pt]
		\includegraphics[height=4.2cm]{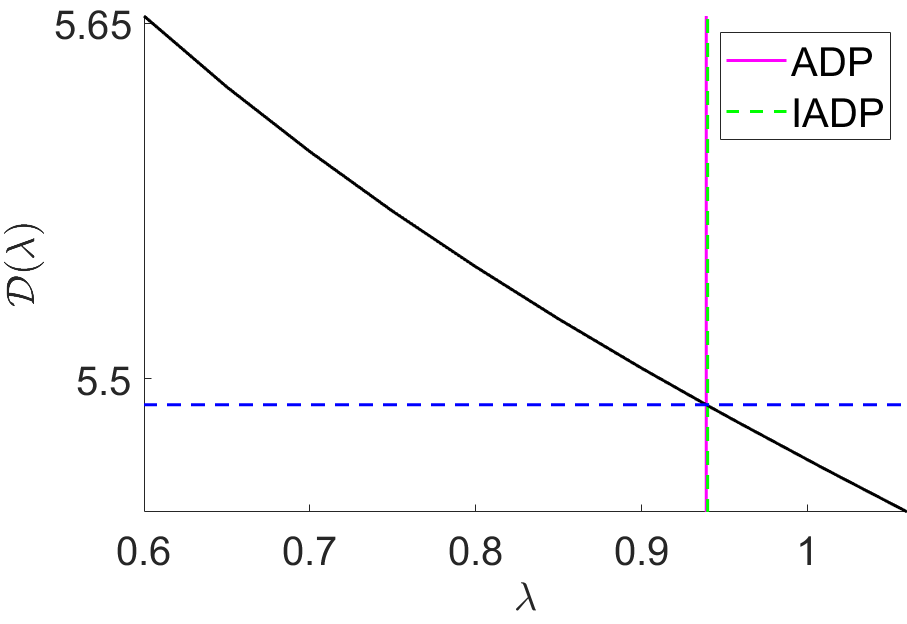}
		& \includegraphics[height=4.2cm]{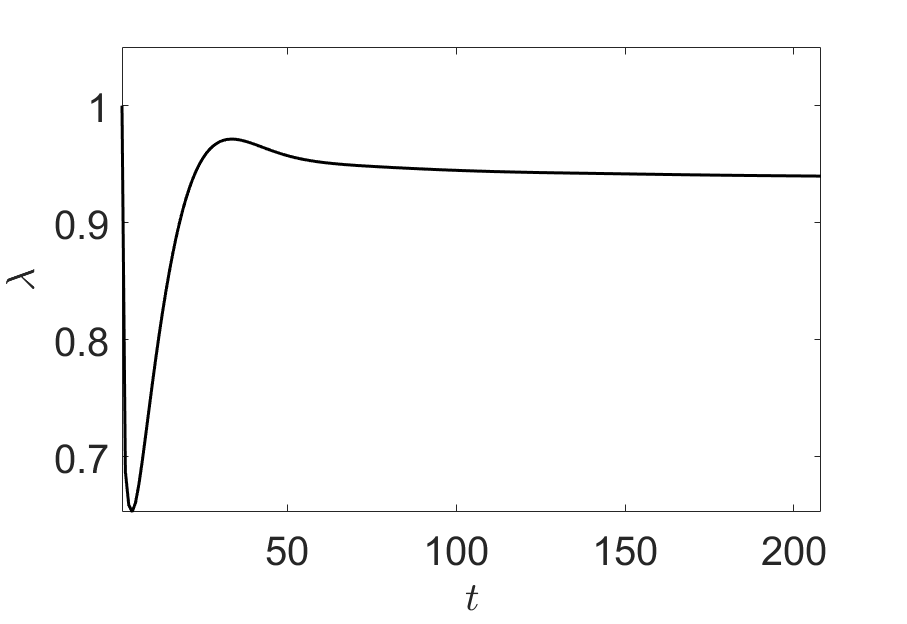}
		\\
		\multicolumn{2}{c}{(b) \texttt{penguin}}\\[4pt]
		\\
		\includegraphics[height=4.2cm]{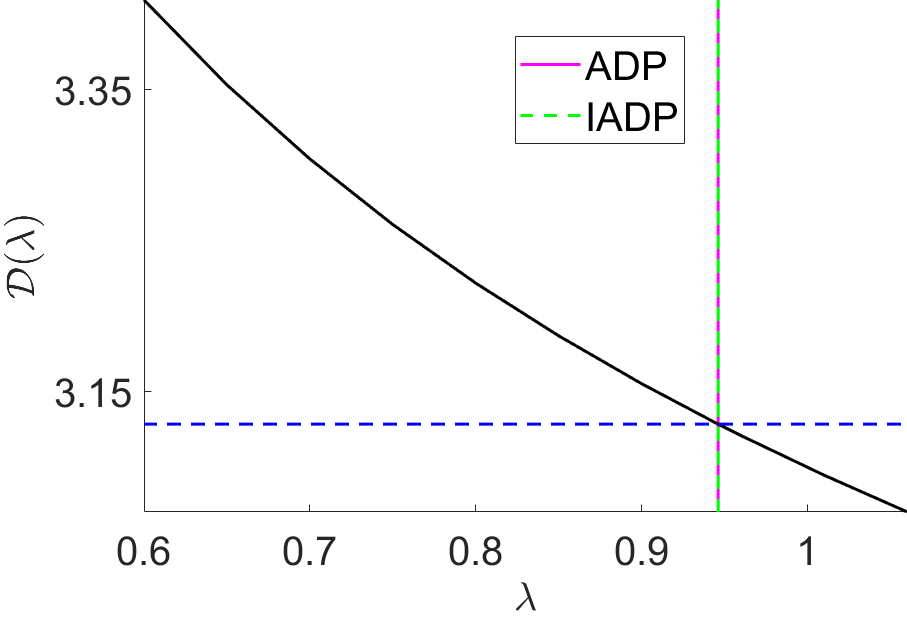}
		& \includegraphics[height=4.2cm]{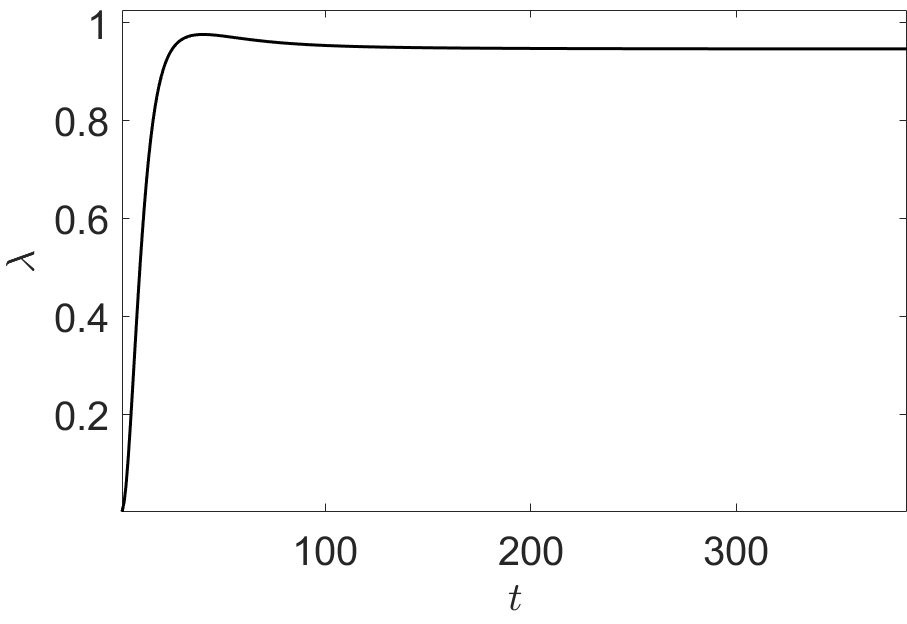}
		\\
		\multicolumn{2}{c}{(c) \texttt{brain}}
	\end{tabular}
	\caption{Discrepancy curve (left) and behavior of $\lambda$ along the ADMM iterations when employing the IADP (right) for the three test images. Note that in the left pictures the values on the $y$ axis are divided by $10^{4}$. 
		\label{fig:discr}}
\end{figure}

Finally, we highlight that the value of $\lambda$ selected by the ADP and the IADP for the test images is approximately 1. This is not surprising as the parameters $\alpha_0,\alpha_1$ have been set as the ones returning the highest ISNR when solving the TGV$^2$-KL model with $\lambda=1$. In other words, in the considered counting regime, the (I)ADP automatically selects $\lambda$ so as to maximize the ISNR.

\subsection{Results of joint estimation of the images and the parameters}

Relying on the results shown in the previous section, which show the robustness of the IADP for the TGV$^2$-KL variational model, we employ the procedure in Algorithm~\ref{alg:2} for the joint estimation of the unknown images and parameters. More specifically, we address the restoration of the three test images
corrupted by Gaussian blur with $\texttt{band}=5$ and $\texttt{sigma}=1$, with the addition of a constant background $\gamma=2\times 10^{-3}$, and further degradated by Poisson noise with different mid and high counting factors, namely $\kappa=30,50,100,500$.
For each test, the initial guess $\bm{u}^{(0)}$ and the parameters $k_0,\theta_0,k_1,\theta_1$ were computed as detailed in Section~\ref{sec:init}.

In Table \ref{tab:vals 1}, for each test image \texttt{phantom} and counting factor $\kappa$, we report the ISNR and SSIM values of the initial TV-regularized solution $\bm{u}^{(0)}$, of the image $\bm{u}^*$ restored by means of Algorithm~\ref{alg:2}, and of the target image $\bar{\bm{u}}$. One can notice that the quality measures corresponding to $\bm{u}^*$ are generally close to those related to the target image $\bar{\bm{u}}$. This behavior reflects the accuracy of the estimates $\alpha_0^*$ and $\alpha_1^*$, which can be compared with the optimal values $\bar{\alpha}_0,\bar{\alpha}_1$ corresponding to $\bar{\bm{u}}$. However, we highlight that $(\alpha_0^{*},\alpha_1^{*})$ and $(\bar{\alpha}_0,\bar{\alpha}_1)$ can only be compared after dividing the former by $\lambda^*$, i.e. the value of $\lambda$ obtained at the end of the alternating scheme. 

{
	\setlength{\tabcolsep}{6pt}
	\renewcommand{\arraystretch}{1.2}	
	\begin{table}
		\small
		\centering
		\begin{tabular}{c|c|c|c|c|c|c|c}
			\hline
			&&\multicolumn{2}{c|}{$\bm{u}_0$}&\multicolumn{2}{c|}{$\bm{u}^*$}&\multicolumn{2}{c}{$\bar{\bm{u}}$}\\
			\hline
			&$\kappa$&ISNR&SSIM&ISNR&SSIM&ISNR&SSIM\\
			\hline
			\multirow{4}{*}{{\rotatebox[origin=c]{90}{\texttt{phantom}}}}&{30}&15.7627&0.5843&17.6337&0.7713&17.8140&0.7937\\
			&{50}&14.2667&0.6183&16.7371&0.8157&16.9036&0.8227\\
			&{100}&12.0545&0.6704&15.4684&0.8394&15.9207&0.8419
			\\
			&{500}&9.8265&0.7660&12.9189&0.8713&  13.1144&0.8720\\
			\hline
			\multirow{4}{*}{{\rotatebox[origin=c]{90}{\texttt{penguin}}}}&{30}&12.3237&0.4241&12.7870&0.5554&12.8399&0.5529\\
			&{50}&11.0319&0.4482&11.4996&0.5713&11.5429&0.5823\\
			&{100}&9.1828&0.4954&9.6342&0.6096&9.8388&0.6100\\
			&{500}& 5.2980&0.6042&5.9338&0.6993&6.0306&0.7018 \\
			\hline
			\multirow{4}{*}{{\rotatebox[origin=c]{90}{\texttt{brain}}}}&30&4.0738&0.5282&4.5550&0.5562&4.9683& 0.5574\\
			&{50}&3.1300&0.5587&3.6589&0.5909&4.0398&0.5907\\
			&100&1.9255&0.5946&2.4887&0.6215&3.3358&0.6288\\
			&500&1.6246&0.6825&2.4086&0.7067&2.8902&0.7088\\
			\hline
		\end{tabular}
		\caption{Quality measures, for the three test images and different photon counting regimes, of the TV-regularized initialization $\bm{u}^{(0)}$ (left), the output $\bm{u}^*$ of Algorithm~\ref{alg:2} (middle), and the restored target image $\bar{\bm{u}}$ obtained by manually tuning $\alpha_0,\alpha_1$ in the TGV$^2$-KL model so as to maximize the ISNR.\label{tab:vals 1}}
	\end{table}
}

In Table \ref{tab:vals 2}, we report the initial estimates $\alpha_0^{(0)}$, $\alpha_1^{(0)}$ computed as detailed in Section~\ref{sec:init}, the output estimates $\alpha_0^*$, $\alpha_1^*$ divided by $\lambda^*$, and the target $\bar{\alpha}_0$, $\bar{\alpha}_1$ values. The estimates obtained by our alternating scheme are mostly close to the target parameters, with the more significant difference arising in the lowest counting regime,
which corresponds to the scenario in which the IADP presents a weaker behavior.

{
	\setlength{\tabcolsep}{6pt}
	\renewcommand{\arraystretch}{1.2}	
	\begin{table}
		\small
		\centering
		\begin{tabular}{c|c|c|c|c|c|c|c}
			\hline
			&$\kappa$&$\alpha_0^{(0)}$&$\alpha_1^{(0)}$&$\alpha_0^*/\lambda^*$&$\alpha_1^*/\lambda^*$&$\bar{\alpha}_0$&$\bar{\alpha}_1$\\
			\hline
			\multirow{4}{*}{{\rotatebox[origin=c]{90}{\texttt{phantom}}}}&{30}&0.1453& 2.8596&0.1725 &2.1626&0.2056 &1.6667\\
			&{50}&0.1124&1.7896&0.1615 &0.5857&0.1620& 0.7790\\
			&100&0.1007&0.3391&0.1570 &0.4958&0.1301&0.4767\\
			&{500}&0.0347&0.1870&0.0763 &0.3771&0.0550&0.2786\\
			\hline
			\multirow{4}{*}{{\rotatebox[origin=c]{90}{\texttt{penguin}}}}&{30}&0.1305&2.1357&0.1609&0.4261 &0.1571&0.4714 \\
			&{50}&0.1009&1.2118&0.1201 & 0.4449&0.1157 &0.3102\\
			&{100}&0.0711&0.5775&0.0752&0.1313&0.0702&0.1921\\
			&{500}&0.0314&0.1018&0.0289 &0.0570&0.0251 &0.0413\\
			\hline
			\multirow{4}{*}{{\rotatebox[origin=c]{90}{\texttt{brain}}}}&30&0.2030&0.5303&0.1364 &0.2206&0.0833 &0.1021\\
			&{50}&0.1532&0.3163&0.1059 &0.1387&0.0643 &0.0786\\
			&100&0.1042&0.1593&0.0855 &0.1287&0.0412 &0.0532\\
			&500&0.0288&0.0370&0.0283 &0.0488&0.0176 &0.0198\\
			\hline
		\end{tabular}
		\caption{Parameters $\alpha_0,\alpha_1$ estimated starting from the TV-regularized initialization (left), from the output restoration of the proposed Algorithm \ref{alg:2} (middle), and manually selected so as to maximize the ISNR of the output restoration of the TGV$^2$-KL model.\label{tab:vals 2}}
	\end{table}
}

\begin{figure}
	\setlength{\tabcolsep}{1.8pt}
	\begin{tabular}{ccccc}
		&    $\bm{b}$&$\bm{u}^{(0)}$&$\bm{u}^*$&$\bar{\bm{u}}$\\
		\raisebox{1.7cm}{\rotatebox[origin=c]{90}{$\kappa=30$}}&\includegraphics[width=3.7cm]{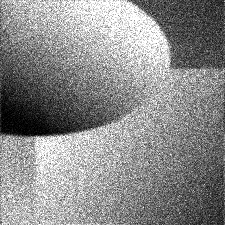}&\includegraphics[width=3.7cm]{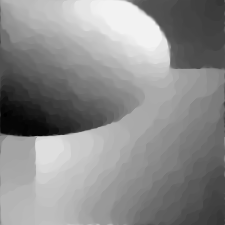}&\includegraphics[width=3.7cm]{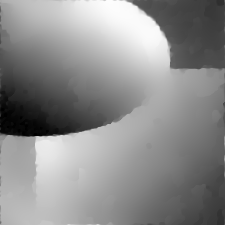}&\includegraphics[width=3.7cm]{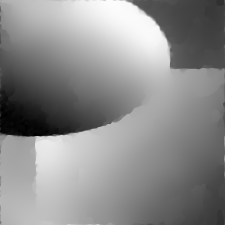}\\
		\raisebox{1.7cm}{\rotatebox[origin=c]{90}{$\kappa=50$}}&\includegraphics[width=3.7cm]{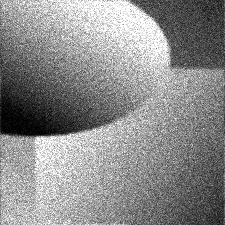}&\includegraphics[width=3.7cm]{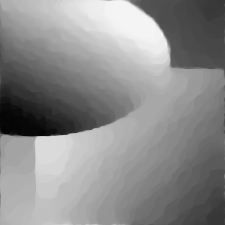}&\includegraphics[width=3.7cm]{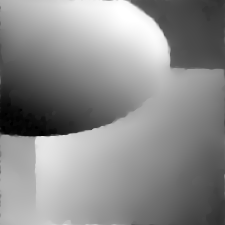}&\includegraphics[width=3.7cm]{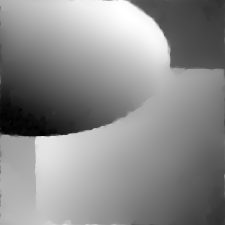}\\
		\raisebox{1.8cm}{\rotatebox[origin=c]{90}{$\kappa=100$}}&\includegraphics[width=3.7cm]{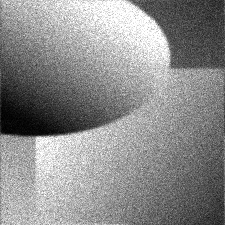}&\includegraphics[width=3.7cm]{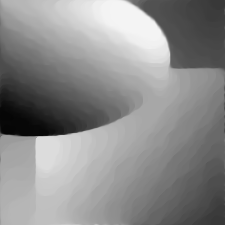}&\includegraphics[width=3.7cm]{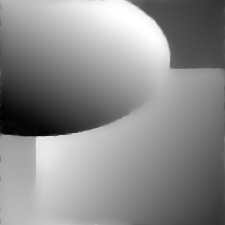}&\includegraphics[width=3.7cm]{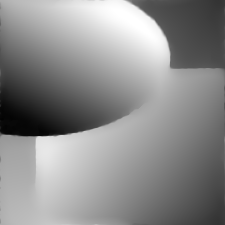}\\
		\raisebox{1.8cm}{\rotatebox[origin=c]{90}{$\kappa=500$}}&\includegraphics[width=3.7cm]{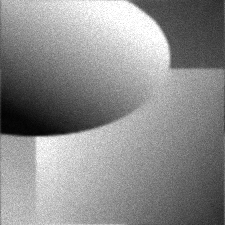}&\includegraphics[width=3.7cm]{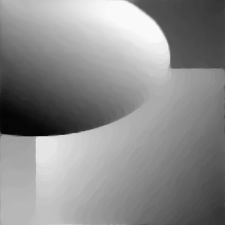}&\includegraphics[width=3.7cm]{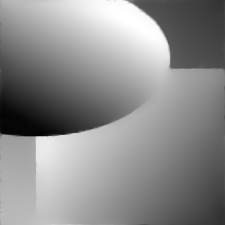}&\includegraphics[width=3.7cm]{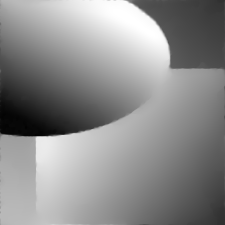}
	\end{tabular}
	\caption{From left to right: observed data $\bm{b}$, initial guess $\bm{u}^{(0)}$, output $\bm{u}^*$ of the proposed procedure, and target restoration $\bar{\bm{u}}$ for the test image \texttt{phantom} in different counting regimes.\label{fig:ph_rec}}
\end{figure}


Some specific comments on specific test images are in order. The good performance of the alternating scheme applied to \texttt{penguin} also in the lowest counting regime can be ascribed to the limited number of zeros arising in the acquisition of this image, which is significantly smaller than the number of zeros in the test image \texttt{phantom} with the same counting factor. Thus, for \texttt{penguin} the IADP is capable of selecting the regularization parameter in a better way. This behavior seems to be confirmed by the results on the image \texttt{brain}, which has a large number of zero entries. In this case, the ISNR and SSIM values in Table~\ref{tab:vals 1} suggest that the restored image $\bm{u}^*$ is not as close as to the target image $\bar{\bm{u}}$. Accordingly, the estimated parameters reported in Table~\ref{tab:vals 2} are generally larger than the target ones. However, from Figure~\ref{fig:brain_rec} one can observe that this does not yield TV-type restorations, but the output $\bm{u}^*$ is smoother than the target $\bar{\bm{u}}$. This suggests that the proposed strategy suffers from the typical oversmoothing caused by a non-optimal selection of $\lambda$.

\begin{figure}
	\setlength{\tabcolsep}{1.8pt}
	\begin{tabular}{ccccc}
		&    $\bm{b}$&$\bm{u}^{(0)}$&$\bm{u}^*$&$\bar{\bm{u}}$\\
		\raisebox{1.6cm}{\rotatebox[origin=c]{90}{$\kappa=30$}}&\includegraphics[width=3.7cm]{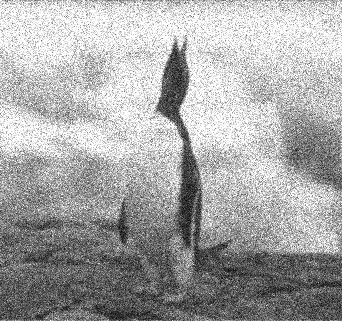}&\includegraphics[width=3.7cm]{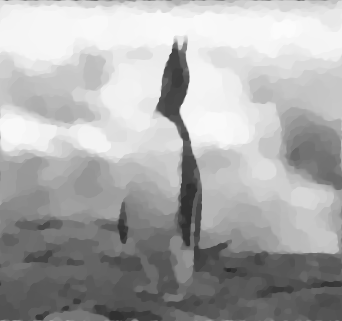}&\includegraphics[width=3.7cm]{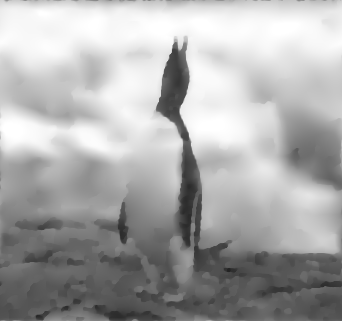}&\includegraphics[width=3.7cm]{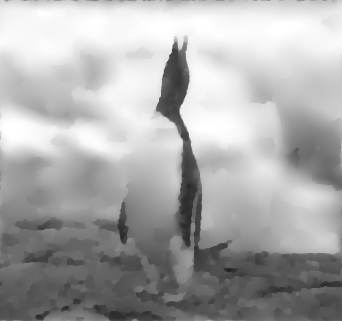}\\
		\raisebox{1.6cm}{\rotatebox[origin=c]{90}{$\kappa=50$}}&\includegraphics[width=3.7cm]{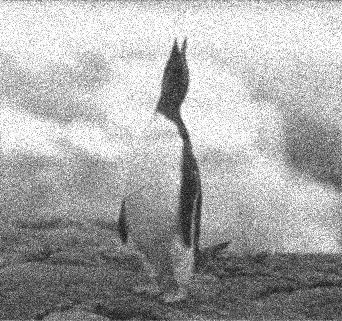}&\includegraphics[width=3.7cm]{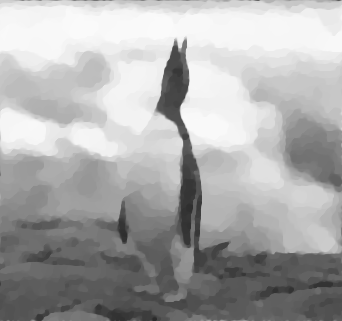}&\includegraphics[width=3.7cm]{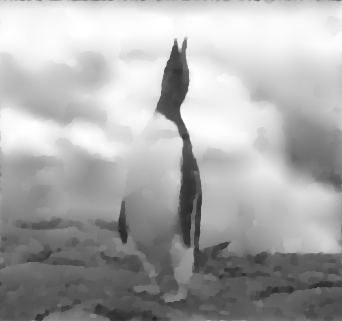}&\includegraphics[width=3.7cm]{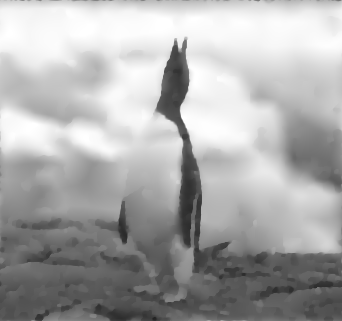}\\
		\raisebox{1.6cm}{\rotatebox[origin=c]{90}{$\kappa=100$}}&\includegraphics[width=3.7cm]{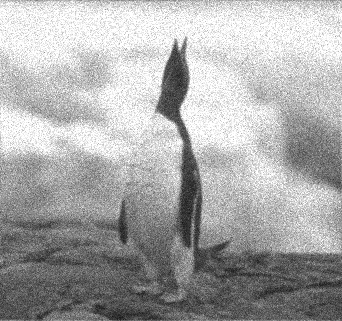}&\includegraphics[width=3.7cm]{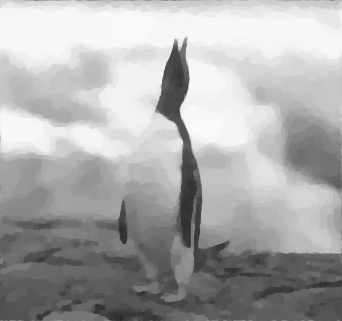}&\includegraphics[width=3.7cm]{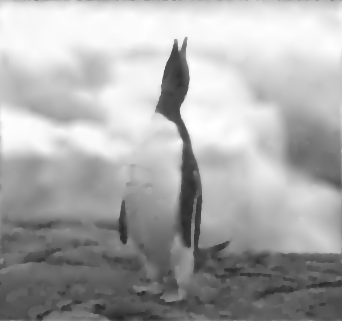}&\includegraphics[width=3.7cm]{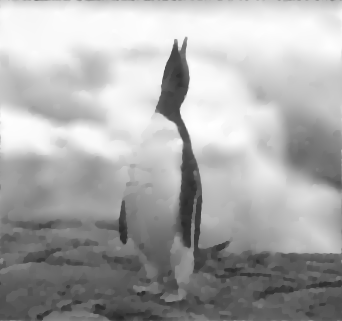}\\
		\raisebox{1.6cm}{\rotatebox[origin=c]{90}{$\kappa=500$}}&\includegraphics[width=3.7cm]{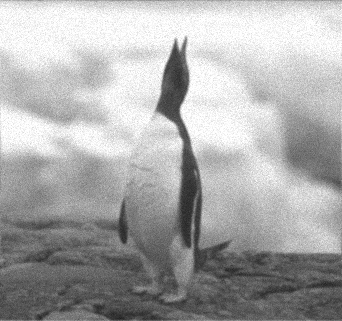}&\includegraphics[width=3.7cm]{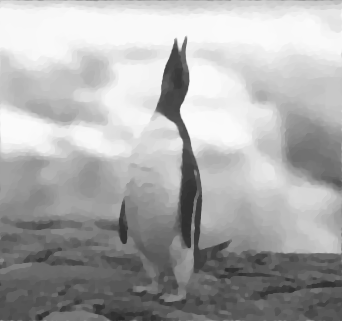}&\includegraphics[width=3.7cm]{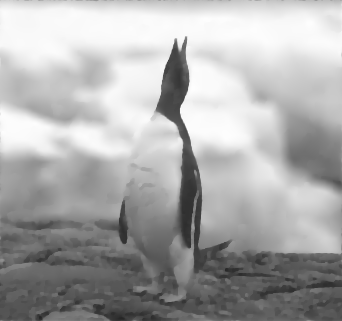}&\includegraphics[width=3.7cm]{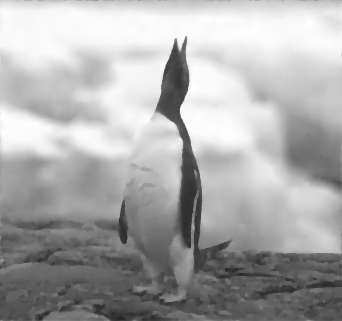}
	\end{tabular}
	\caption{From left to right: observed data $\bm{b}$, initial guess $\bm{u}^{(0)}$, output $\bm{u}^*$ of the proposed procedure, and target restoration $\bar{\bm{u}}$ for the test image \texttt{penguin} different counting regimes.\label{fig:peng_rec}}
\end{figure}


\begin{figure}
	\setlength{\tabcolsep}{1.8pt}
	\begin{tabular}{ccccc}
		&    $\bm{b}$&$\bm{u}^{(0)}$&$\bm{u}^*$&$\bar{\bm{u}}$\\
		\raisebox{2.3cm}{\rotatebox[origin=c]{90}{$\kappa=30$}}&\includegraphics[width=3.7cm]{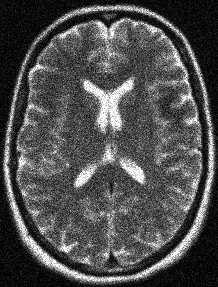}&\includegraphics[width=3.7cm]{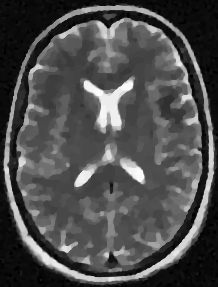}&\includegraphics[width=3.7cm]{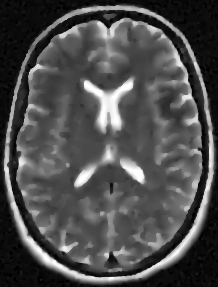}&\includegraphics[width=3.7cm]{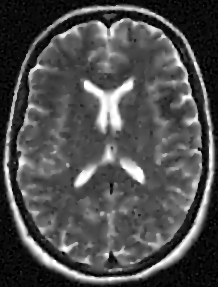}\\
		\raisebox{2.3cm}{\rotatebox[origin=c]{90}{$\kappa=50$}}&\includegraphics[width=3.7cm]{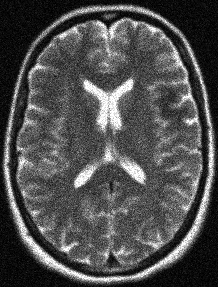}&\includegraphics[width=3.7cm]{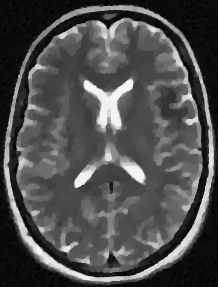}&\includegraphics[width=3.7cm]{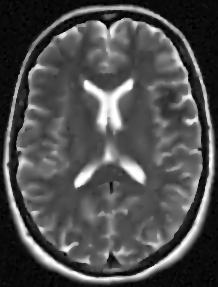}&\includegraphics[width=3.7cm]{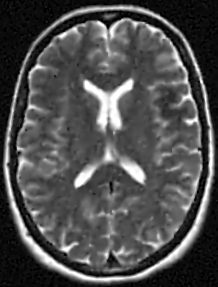}\\
		\raisebox{2.3cm}{\rotatebox[origin=c]{90}{$\kappa=100$}}&\includegraphics[width=3.7cm]{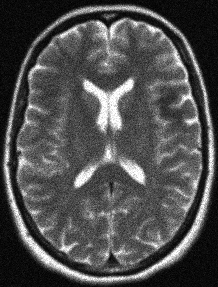}&\includegraphics[width=3.7cm]{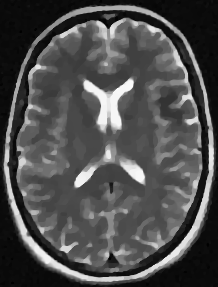}&\includegraphics[width=3.7cm]{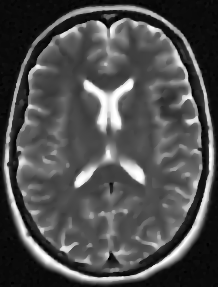}&\includegraphics[width=3.7cm]{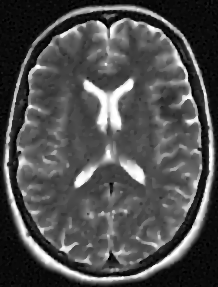}\\
		\raisebox{2.3cm}{\rotatebox[origin=c]{90}{$\kappa=500$}}&\includegraphics[width=3.7cm]{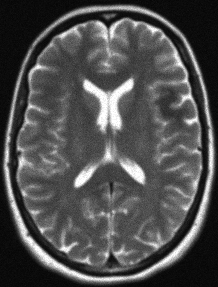}&\includegraphics[width=3.7cm]{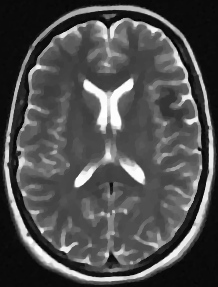}&\includegraphics[width=3.7cm]{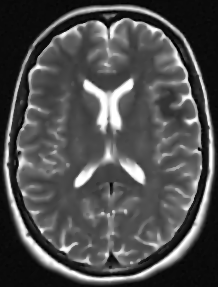}&\includegraphics[width=3.7cm]{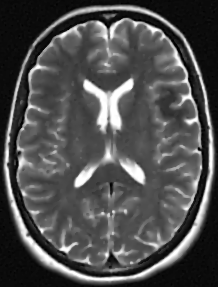}
	\end{tabular}
	\caption{From left to right: observed data $\bm{b}$, initial guess $\bm{u}^{(0)}$, output $\bm{u}^*$ of the proposed procedure, and target restoration $\bar{\bm{u}}$ for the test image \texttt{brain} in different counting regimes.\label{fig:brain_rec}}
\end{figure}


Finally, in Figure~\ref{fig:conv} we provide numerical evidence of the convergence of the overall procedure. For the image \texttt{penguin} with counting factor $\kappa=50$, we monitor the behavior of the ISNR and SSIM values, of the parameters $\alpha_0,\alpha_1$ (divided by the value of $\lambda$ achieved at the $k$-th iteration of Algorithm~\ref{alg:2}), and of the relative change along the outer iterations. We note that all these quantities stabilize after few iterations.

\begin{figure}
	\centering
	\begin{subfigure}[t]{0.4\textwidth}
		\centering
		\includegraphics[width=5.2cm]{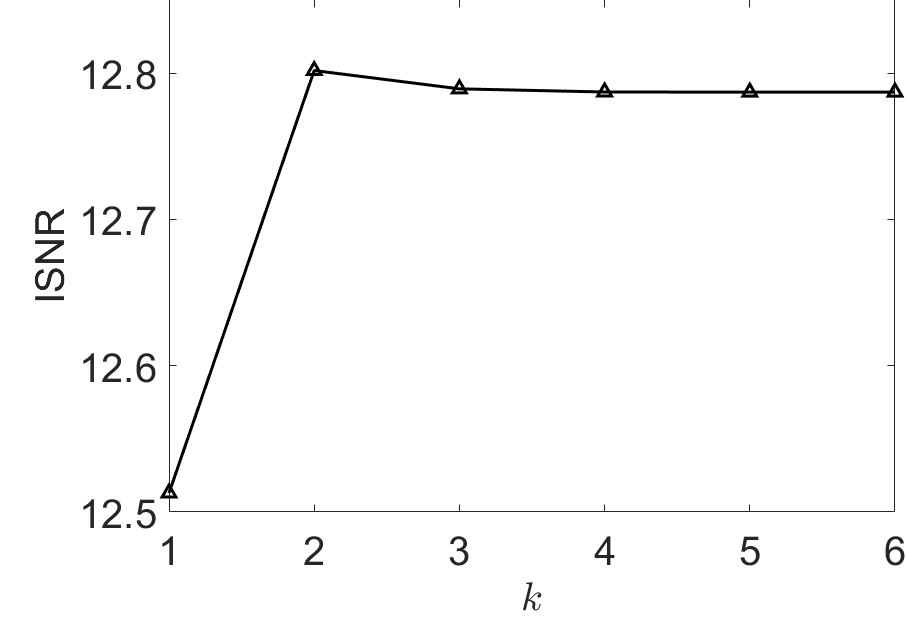}
		\caption{}
		\label{fig:peng_isnr}
	\end{subfigure}
	\begin{subfigure}[t]{0.4\textwidth}
		\centering
		\includegraphics[width=5.2cm]{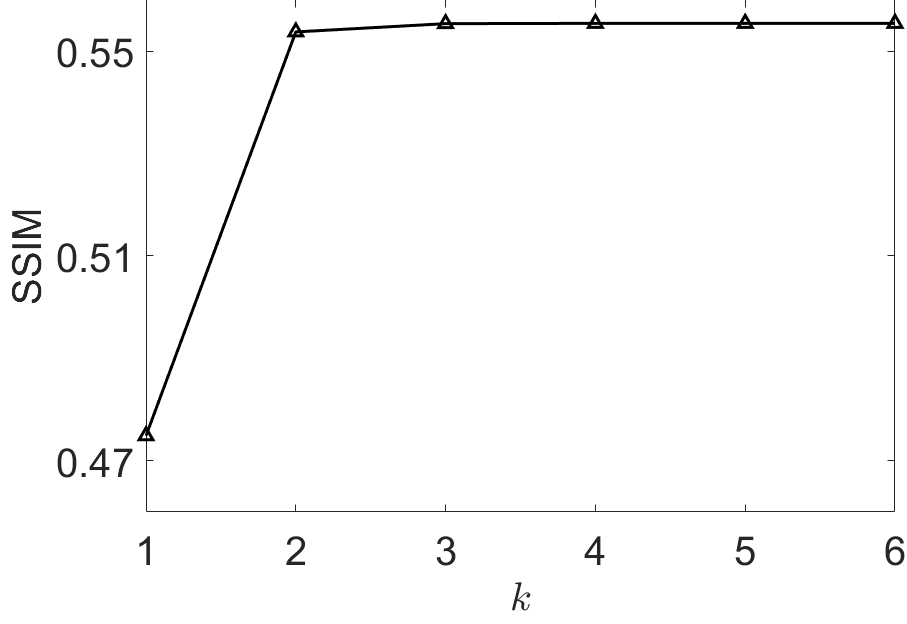}
		\caption{}
		\label{fig:peng_ssim}
	\end{subfigure}\\
	\begin{subfigure}[t]{0.32\textwidth}
		\centering
		\includegraphics[width=5.2cm]{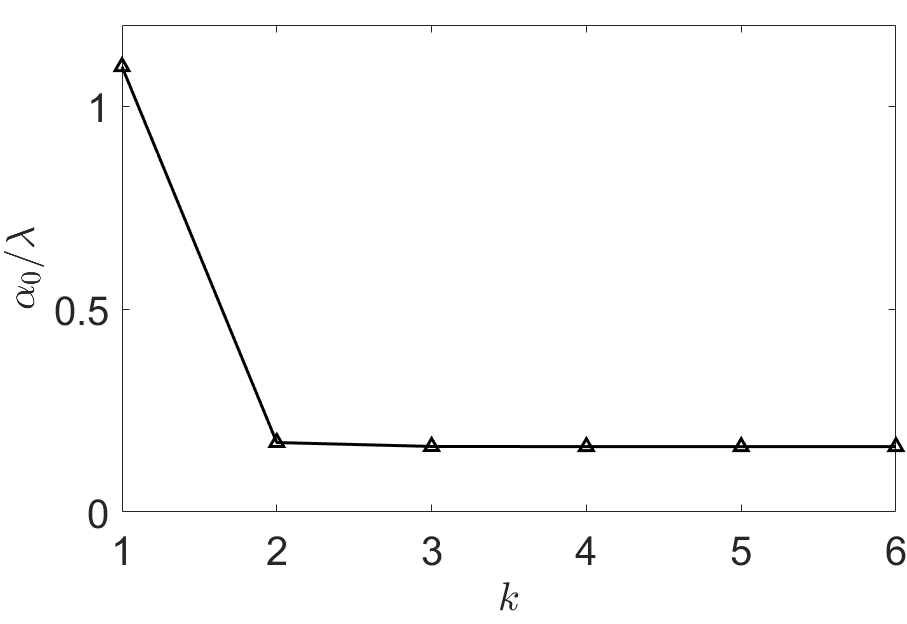}
		\caption{}
		\label{fig:peng_a0}
	\end{subfigure}
	\begin{subfigure}[t]{0.32\textwidth}
		\centering
		\includegraphics[width=5.2cm]{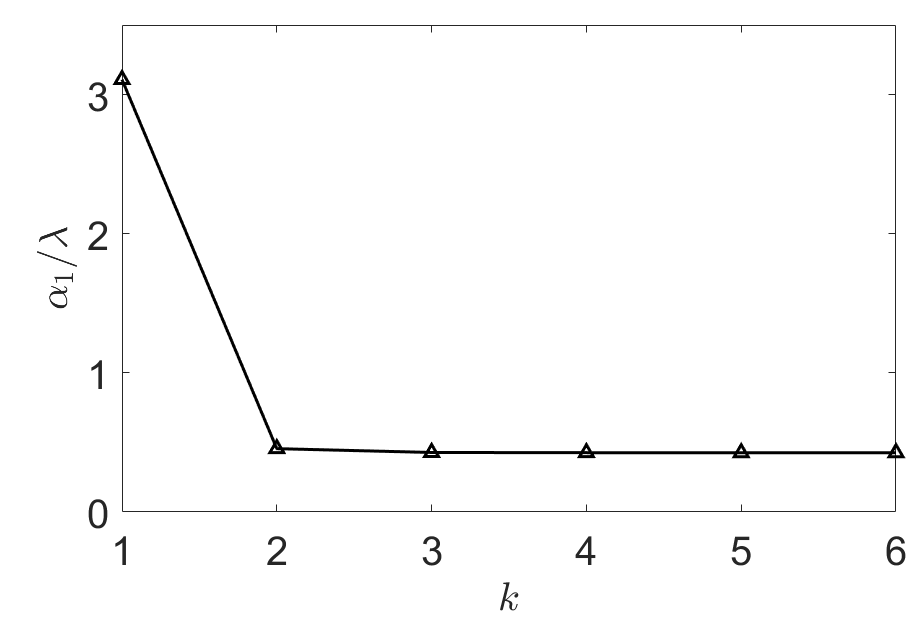}
		\caption{}
		\label{fig:peng_a1}
	\end{subfigure}
	\begin{subfigure}[t]{0.32\textwidth}
		\centering
		\includegraphics[width=5.2cm]{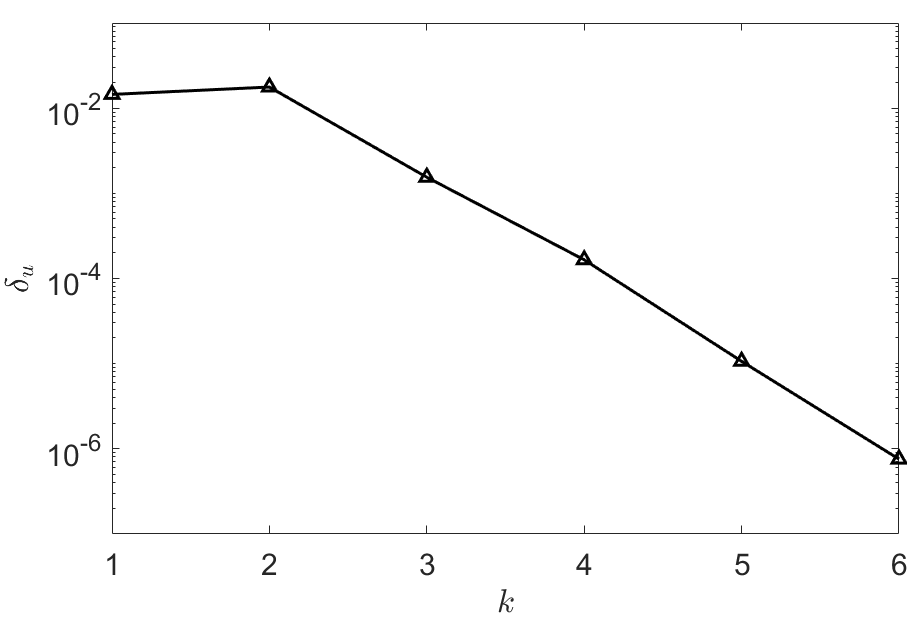}
		\caption{}
		\label{fig:peng_itr}
	\end{subfigure}
	\caption{Monitored quantities along the iterations of the alternating scheme for the restoration of \texttt{penguin} with $\kappa=50$.}
	\label{fig:conv}
\end{figure}

\section{Conclusion\label{sec:concl}}

We presented an automatic procedure for the selection of the parameters in the TGV$^2$-KL variational model for the restoration of images under Poisson noise corruption. The proposed strategy relies on a hierarchical Bayesian formulation which naturally allows us to design a probabilistic grounded alternating scheme for the joint estimation of the original uncorrupted image and the unknown parameters. The numerical procedure takes advantage of a classical discrepancy principle for the estimation of the regularization parameter $\lambda$, which is artificially incorporated in the final model so as to increase the robustness of the overall method. Numerical experiments show that the introduced machinery is capable of returning high-quality restorations that are very close to the ones achievable by manually tuning the unknown parameters in the TGV$^2$ regularization term. However, we observed that in presence of low-counting regimes or when the number of zeros in the acquisition is particularly large, the alternating method can suffer from the typical weakness of the ADP.

As a future work, we aim to investigate the interplays of the proposed procedure with different instances of the general DP, by giving a closer look at theoretical guarantees of existence and uniqueness for the solution of the discrepancy equation.

\bigskip

\noindent \textbf{Acknowledgments.} This research was partially supported by the Gruppo Nazionale per il Calcolo Scientifico of the Istituto Nazionale di Alta Matematica (GNCS-INdAM) and by the VALERE Program of the University of Campania `Luigi Vanvitelli'. 

\bigskip
\bigskip

\bibliographystyle{plain}
\bibliography{references}

\begin{thebibliography}{10}

\bibitem{part2}
S.~D. Babacan, R.~Molina, and A.~K. Katsaggelos.
\newblock Parameter estimation in {TV} image restoration using variational
  distribution approximation.
\newblock {\em IEEE Transactions on Image Processing}, 17(3):326--339, 2008.

\bibitem{prox}
A.~Beck.
\newblock {\em First-Order Methods in Optimization}.
\newblock MOS-SIAM Series on Optimization. SIAM, Philadelphia, PA, USA, 2017.

\bibitem{pois1}
M.~Bertero, P.~Boccacci, G.~Desider{\`{a}}, and G.~Vicidomini.
\newblock Image deblurring with {P}oisson data: from cells to galaxies.
\newblock {\em Inverse Problems}, 25(12):123006, 2009.

\bibitem{Bertero2010}
M.~Bertero, P.~Boccacci, G.~Talenti, R.~Zanella, and L.~Zanni.
\newblock A discrepancy principle for {P}oisson data.
\newblock {\em Inverse Problems}, 26(10):105004, 2010.

\bibitem{bevi}
F.~Bevilacqua, A.~Lanza, M.~Pragliola, and F.~Sgallari.
\newblock Nearly exact discrepancy principle for low-count {P}oisson image
  restoration.
\newblock {\em Journal of Imaging}, 8(1):1, 2022.

\bibitem{Bonettini_2014}
S.~Bonettini and M.~Prato.
\newblock Accelerated gradient methods for the {X}-ray imaging of solar flares.
\newblock {\em Inverse Problems}, 30(5):055004, 2014.

\bibitem{admm}
S.~Boyd, N.~Parikh, E.~Chu, B.~Peleato, and J.~Eckstein.
\newblock Distributed optimization and statistical learning via the alternating
  direction method of multipliers.
\newblock {\em Foundations and Trends in Machine Learning}, 3(1):1--122, 2011.

\bibitem{tgv2}
K.~Bredies and M.~Holler.
\newblock Regularization of linear inverse problems with total generalized
  variation.
\newblock {\em Journal of Inverse and Ill-posed Problems}, 22(6):871--913,
  2014.

\bibitem{tgv}
K.~Bredies, K.~Kunisch, and T.~Pock.
\newblock Total generalized variation.
\newblock {\em SIAM Journal on Imaging Sciences}, 3(3):492--526, 2010.

\bibitem{Calvetti_2020}
D.~Calvetti, M.~Pragliola, E.~Somersalo, and A.~Strang.
\newblock Sparse reconstructions from few noisy data: analysis of hierarchical
  {B}ayesian models with generalized gamma hyperpriors.
\newblock {\em Inverse Problems}, 36(2):025010, 2020.

\bibitem{ten}
D.~Calvetti and E.~Somersalo.
\newblock {\em Introduction to {B}ayesian Scientific Computing: Ten Lectures on
  Subjective Computing}.
\newblock Surveys and Tutorials in the Applied Mathematical Sciences.
  Springer-Verlag, Berlin, Heidelberg, 2007.

\bibitem{hyper}
D.~Calvetti and E.~Somersalo.
\newblock Hypermodels in the {B}ayesian imaging framework.
\newblock {\em Inverse Problems}, 24(3):034013, 2008.

\bibitem{Calvetti2011}
D.~Calvetti and E.~Somersalo.
\newblock Statistical methods in imaging.
\newblock In O.~Scherzer, editor, {\em Handbook of Mathematical Methods in
  Imaging}, pages 913--957, New York, NY, 2011. Springer.

\bibitem{Calvetti_2019}
D.~Calvetti, E.~Somersalo, and A.~Strang.
\newblock Hierachical {B}ayesian models and sparsity: $\ell$2-magic.
\newblock {\em Inverse Problems}, 35(3):035003, 2019.

\bibitem{BF}
M.~Carlavan and L.~Blanc-Feraud.
\newblock Sparse {P}oisson noisy image deblurring.
\newblock {\em IEEE Transactions on Image Processing}, 21(4):1834--1846, 2012.

\bibitem{DLV_dtgv}
D.~di~Serafino, G.~Landi, and M.~Viola.
\newblock Directional {TGV}-based image restoration under {P}oisson noise.
\newblock {\em Journal of Imaging}, 7(6):99, 2021.

\bibitem{DLV_tgv}
D.~di~Serafino, G.~Landi, and M.~Viola.
\newblock {TGV}-based restoration of {P}oissonian images with automatic
  estimation of the regularization parameter.
\newblock In {\em Proceedings of the 21st International Conference on
  Computational Science and Its Applications (ICCSA)}. IEEE CPS, 2021.

\bibitem{jeff}
M.~A.~T. Figueiredo and R.~D. Nowak.
\newblock {{B}ayesian wavelet-based image estimation using noninformative
  priors}.
\newblock In F.~J. Pr{\^e}teux, A.~Mohammad-Djafari, and E.~R. Dougherty,
  editors, {\em Mathematical Modeling, Bayesian Estimation, and Inverse
  Problems}, volume 3816, pages 97--108. International Society for Optics and
  Photonics, SPIE, 1999.

\bibitem{geman}
S.~Geman and D.~Geman.
\newblock Stochastic relaxation, {G}ibbs distributions, and the {B}ayesian
  restoration of images.
\newblock {\em IEEE Transactions on Pattern Analysis and Machine Intelligence},
  PAMI-6(6):721--741, 1984.

\bibitem{knoll}
F.~Knoll, K.~Bredies, T.~Pock, and R.~Stollberger.
\newblock Second order total generalized variation ({TGV}) for {MRI}.
\newblock {\em Magnetic Resonance in Medicine}, 65, 2011.

\bibitem{bit}
A.~Lanza, M.~Pragliola, and F.~Sgallari.
\newblock Automatic fidelity and regularization terms selection in variational
  image restoration.
\newblock {\em BIT Numerical Mathematics}, 2021.

\bibitem{carola}
J.~C.~De los Reyes, C.~B. Sch{\"o}nlieb, and T.~Valkonen.
\newblock Bilevel parameter learning for higher-order total variation
  regularisation models.
\newblock {\em Journal of Mathematical Imaging and Vision}, 57(1):1--25, 2016.

\bibitem{part1}
J.~P. Oliveira, J.~M. Bioucas-Dias, and M.~A.~T. Figueiredo.
\newblock Adaptive total variation image deblurring: A
  majorization--minimization approach.
\newblock {\em Signal Processing}, 89(9):1683--1693, 2009.

\bibitem{kostas}
K.~Papafitsoros and C.~B. Sch\"{o}nlieb.
\newblock A combined first and second order variational approach for image
  reconstruction.
\newblock {\em Journal of Mathematical Imaging and Vision}, 48(2):308--338,
  2014.

\bibitem{marg}
M.~Pereyra, J.~M. Bioucas-Dias, and M.~A.~T. Figueiredo.
\newblock Maximum-a-posteriori estimation with unknown regularisation
  parameters.
\newblock In {\em Proceedings of the 23rd European Signal Processing Conference
  (EUSIPCO)}, pages 230--234, 2015.

\bibitem{bayesian}
C.~P. Robert.
\newblock {\em The {B}ayesian choice: from decision-theoretic foundations to
  computational implementation}.
\newblock Springer Texts in Statistics. Springer Science \& Business Media, New
  York, NY, 2007.

\bibitem{tvrof}
L.~I. Rudin, S.~Osher, and E.~Fatemi.
\newblock Nonlinear total variation based noise removal algorithms.
\newblock {\em Physica D: Nonlinear Phenomena}, 60(1--4):259--268, 1992.

\bibitem{Teuber}
T.~Teuber, G.~Steidl, and R.~H. Chan.
\newblock Minimization and parameter estimation for seminorm regularization
  models with {I}-divergence constraints.
\newblock {\em Inverse Problems}, 29(3):035007, 2013.

\bibitem{knoll2}
T.~Valkonen, K.~Bredies, and F.~Knoll.
\newblock Total generalized variation in diffusion tensor imaging.
\newblock {\em SIAM Journal on Imaging Sciences}, 6(1):487--525, 2013.

\bibitem{pereyra}
A.~Fernandez Vidal, V.~De Bortoli, M.~Pereyra, and A.~Durmus.
\newblock Maximum likelihood estimation of regularization parameters in
  high-dimensional inverse problems: An empirical {B}ayesian approach. {P}art
  {I}: Methodology and experiments.
\newblock {\em SIAM Journal on Imaging Sciences}, 13(4):1945--1989, 2020.

\bibitem{part3}
Q.~Xie, D.~Zeng, Q.~Zhao, D.~Meng, Z.~Xu, Z.~Liang, and J.~Ma.
\newblock Robust low-dose {CT} sinogram preprocessing via exploiting
  noise-generating mechanism.
\newblock {\em IEEE Transactions on Medical Imaging}, 36(12):2487--2498, 2017.

\bibitem{Bertero2009}
R.~Zanella, P.~Boccacci, L.~Zanni, and M.~Bertero.
\newblock Efficient gradient projection methods for edge-preserving removal of
  {P}oisson noise.
\newblock {\em Inverse Problems}, 25:045010, 2009.

\bibitem{Benf_jmiv}
L.~Zanni, A.~Benfenati, M.~Bertero, and V.~Ruggiero.
\newblock Numerical methods for parameter estimation in {P}oisson data
  inversion.
\newblock {\em Journal of Mathematical Imaging and Vision}, 52(3):397--413,
  2015.

\bibitem{ssim}
W.~Zhou, A.~C. Bovik, H.~R. Sheikh, and E.~P. Simoncelli.
\newblock Image quality assessment: from error visibility to structural
  similarity.
\newblock {\em IEEE Transactions on Image Processing}, 13(4):600--612, 2004.

\end{thebibliography}

\end{document}